\title{\sc A Posteriori Error Analysis for the Optimal Control of Magneto-Static Fields}
\def\shorttitle{On an Optimal Control Problem for Magneto-Static Fields}
\def\pauthor{Dirk Pauly \& Irwin Yousept}
\def\mylabelonoff{off}
\def\allowdisbrk{yes}
\author{{\sf\pauthor}}
\markboth{\pauthor}{\shorttitle}
\numberwithin{equation}{section}
\newcommand{\bewboxw}{\mbox{}\hfill $\square$ \\}
\newenvironment{proof}{{\noindent\bf Proof }}{\bewboxw}
\def\rc{\color{red}}
\newcommand{\Abb}[5]{\begin{array}{ccccc}#1&:&#2&\longrightarrow&#3\\{}&{}&#4&\longmapsto&#5\end{array}}
\newcommand{\ds}{\displaystyle}
\newcommand{\ol}{\overline}
\newcommand{\ul}{\underline}
\newcommand{\ub}{\underbrace}
\newcommand{\nz}{\mathbb{N}}
\newcommand{\rz}{\mathbb{R}}
\newcommand{\rt}{\rz^3}
\newcommand{\rttt}{\rz^{3\times3}}
\DeclareMathOperator{\p}{\partial}
\newcommand{\na}{\nabla}
\DeclareMathOperator{\rot}{rot}
\renewcommand{\div}{\operatorname{div}}
\newcommand{\eps}{\varepsilon}
\newcommand{\epsmo}{\eps^{-1}}
\newcommand{\mumo}{\mu^{-1}}
\newcommand{\muu}{\ul{\mu}}
\newcommand{\muo}{\ol{\mu}}
\newcommand{\epsu}{\ul{\eps}}
\newcommand{\epso}{\ol{\eps}}
\newcommand{\om}{\Omega}
\newcommand{\oms}{\omega}
\newcommand{\pom}{\p\!\om}
\newcommand{\poms}{\p\!\oms}
\newcommand{\ga}{\Gamma}
\newcommand{\gas}{\gamma}
\newcommand{\equi}{\Leftrightarrow}
\def\impl{\Rightarrow}
\newcommand{\qequi}{\quad\equi\quad}
\def\e{\mathrm{e}}
\DeclareMathOperator{\dist}{dist}
\newcommand{\zvec}[2]{\begin{bmatrix}#1\\#2\end{bmatrix}}
\newcommand{\dvec}[3]{\begin{bmatrix}#1\\#2\\#3\end{bmatrix}}
\newcommand{\zmat}[4]{\begin{bmatrix}#1&#2\\#3&#4\end{bmatrix}}
\newcommand{\dmat}[9]{\begin{bmatrix}#1&#2&#3\\#4&#5&#6\\#7&#8&#9\end{bmatrix}}
\DeclareMathOperator{\id}{id}
\def\set#1#2{\{#1\,:\,#2\}}
\newcommand{\ck}{c_{\mathtt{k}}}
\def\ub{\bar{u}}
\def\vt{\tilde{v}}
\def\vb{\bar{v}}
\def\Hh{\hat{H}}
\def\Hb{\bar{H}}
\def\Et{\tilde{E}}
\def\Eb{\bar{E}}
\def\et{\tilde{e}}
\def\eh{\hat{e}}
\def\eb{\bar{e}}
\def\calM{\mathcal{M}}
\def\calA{\mathcal{A}}
\def\calB{\mathcal{B}}
\def\cE{\mathcal{E}}
\DeclareMathOperator{\Lebesgue}{\mathsf{L}}
\newcommand{\Lgen}[2]{\Lebesgue^{#1}_{#2}}
\def\Li{\Lgen{\infty}{}}
\def\Liom{\Li(\om)}
\def\Lt{\Lgen{2}{}}
\def\Lteps{\Lgen{2}{\eps}}
\def\Ltmu{\Lgen{2}{\mu}}
\def\Ltom{\Lt(\om)}
\def\Ltoms{\Lt(\oms)}
\def\Ltepsom{\Lgen{2}{\eps}(\om)}
\def\Ltepsoms{\Lgen{2}{\eps}(\oms)}
\DeclareMathOperator{\Sobolev}{\mathsf{H}}
\newcommand{\Hgen}[3]{\overset{#3}{\Sobolev}{}^{#1}_{#2}}
\def\Ho{\Hgen{1}{}{}}
\def\Hk{\Hgen{k}{}{}}
\def\Hkom{\Hk(\om)}
\def\Hoc{\Hgen{1}{}{\circ}}
\def\Hkc{\Hgen{k}{}{\circ}}
\def\Hkcom{\Hkc(\om)}
\DeclareMathOperator{\Cont}{\mathsf{C}}
\newcommand{\Cgen}[2]{\overset{#2}{\Cont}{}^{#1}}
\def\Cic{\Cgen{\infty}{\circ}}
\def\Cicom{\Cic(\om)}
\newcommand{\normdst}{\hspace{-0.4ex}}
\newcommand{\scp}[2]{\left\langle#1,#2\right\rangle}
\newcommand{\scps}[2]{\langle#1,#2\rangle}
\newcommand{\scpom}[2]{\scp{#1}{#2}_{\om}}
\newcommand{\scpsom}[2]{\scps{#1}{#2}_{\om}}
\newcommand{\norm}[1]{\left|\normdst\left|#1\right|\normdst\right|}
\newcommand{\norms}[1]{|\normdst|#1|\normdst|}
\newcommand{\tnorm}[1]{\left|\normdst\left|\normdst\left|#1\right|\normdst\right|\normdst\right|}
\newcommand{\tnorms}[1]{|\normdst|\normdst|#1|\normdst|\normdst|}
\newcommand{\dnorm}[1]{\left|\normdst\left|\normdst\left|#1\right|\normdst\right|\normdst\right|}
\newcommand{\normom}[1]{\norm{#1}_{\om}}
\newtheorem{lem}{Lemma}
\newtheorem{theo}[lem]{Theorem}
\newtheorem{rem}[lem]{Remark}
\DeclareMathOperator{\rotspace}{\mathsf{R}}
\newcommand{\rotgen}[2]{\overset{#2}{\rotspace}{}_{#1}}
\newcommand{\rotgenom}[2]{\rotgen{#1}{#2}(\om)}
\newcommand{\rom}{\rotgenom{}{}}
\newcommand{\rcom}{\rotgenom{}{\circ}}
\newcommand{\rzom}{\rotgenom{0}{}}
\newcommand{\rczom}{\rotgenom{0}{\circ}}
\DeclareMathOperator{\divspace}{\mathsf{D}}
\newcommand{\divgen}[2]{\overset{#2}{\divspace}{}_{#1}}
\newcommand{\divgenom}[2]{\divgen{#1}{#2}(\om)}
\newcommand{\dom}{\divgenom{}{}}
\newcommand{\dcom}{\divgenom{}{\circ}}
\newcommand{\dzom}{\divgenom{0}{}}
\newcommand{\dczom}{\divgenom{0}{\circ}}
\newcommand{\ho}{\Ho}
\newcommand{\hoc}{\Hoc}
\newcommand{\hio}{\hi_{1}}
\newcommand{\hit}{\hi_{2}}
\DeclareMathOperator{\harmonic}{\mathcal{H}}
\newcommand{\harm}[2]{\harmonic^{#1}_{#2}}
\newcommand{\harmom}[2]{\harm{#1}{#2}(\om)}
\newcommand{\harmdepsom}{\harmom{}{\mathtt{D},\eps}}
\newcommand{\harmnmuom}{\harmom{}{\mathtt{N},\mu}}
\newcommand{\non}{\nonumber}
\newcommand{\scpsoms}[2]{\scps{#1}{#2}_{\oms}}
\newcommand{\scpsepsom}[2]{\scps{#1}{#2}_{\om,\eps}}
\newcommand{\scpsepsoms}[2]{\scps{#1}{#2}_{\oms,\eps}}
\newcommand{\scpsmuom}[2]{\scps{#1}{#2}_{\om,\mu}}
\newcommand{\scpsmumoom}[2]{\scps{#1}{#2}_{\om,\mumo}}
\newcommand{\normos}[1]{|#1|} 
\newcommand{\normosepsom}[1]{\normos{#1}_{\om,\eps}}
\newcommand{\normosepsoms}[1]{\normos{#1}_{\oms,\eps}}
\newcommand{\normosmuom}[1]{\normos{#1}_{\om,\mu}}
\newcommand{\normosmuoms}[1]{\normos{#1}_{\oms,\mu}}
\newcommand{\normosepsmoom}[1]{\normos{#1}_{\om,\epsmo}}
\newcommand{\normosmumoom}[1]{\normos{#1}_{\om,\mumo}}
\newcommand{\normosmumooms}[1]{\normos{#1}_{\oms,\mumo}}
\DeclareMathOperator{\diam}{diam}
\def\hio{\mathsf{X}}
\def\hit{\mathsf{Y}}
\def\A{\mathrm{A}}
\def\As{\A^{*}}
\def\cA{\mathcal{A}}
\def\cAs{\mathcal{A}^{*}}
\def\zet{\zeta}
\def\zets{\zet^{*}}
\def\J{J}
\def\e{j}
\def\eb{\bar{\e}}
\def\eh{\e_{\mathtt d}}
\def\et{\tilde{\e}}
\def\h{h}
\def\Hh{H_{\mathtt d}}
\def\Hti{\tilde{H}}
\renewcommand{\cE}{\mathscr{J}}
\newcommand{\cEa}{\cE}
\def\calA{\mathscr{A}}
\def\calB{\mathscr{B}}
\def\calC{\mathscr{C}}
\def\calD{\mathscr{D}}
\newcommand{\rn}{\mathbb{R}}
\renewcommand{\rt}{\rn^3}
\renewcommand{\rttt}{\rn^{3\times3}}
\newcommand{\ka}{\kappa}
\newcommand{\kamo}{\ka^{-1}}
\newcommand{\cM}{\calM}
\newcommand{\cMt}{\tilde{\calM}}
\newcommand{\ca}{c_{\A}}
\newcommand{\cas}{c_{\As}}
\def\cmtomepsmumo{c_{\mathtt{m,t},\om,\eps,\mumo}}
\def\chmom{\hat{c}_{\mathtt{m},\om}}
\def\cpom{c_{\mathtt{p},\om}}
\def\cpcom{c_{\mathtt{p},\circ,\om}}
\def\cpcomeps{c_{\mathtt{p},\circ,\om,\eps}}
\def\chpom{\hat{c}_{\mathtt{p},\om}}
\def\cpoms{c_{\mathtt{p},\oms}}
\def\cpomseps{c_{\mathtt{p},\oms,\eps}}
\def\chpoms{\hat{c}_{\mathtt{p},\oms}}
\newcommand{\cic}{\Cgen{\infty}{\circ}}
\newcommand{\cicom}{\cic(\om)}
\newcommand{\hooms}{\ho(\oms)}
\newcommand{\hob}{\Hgen{1}{\bot}{}}
\newcommand{\hoboms}{\Hgen{1}{\bot}{}(\oms)}
\renewcommand{\r}{\rotgen{}{}}
\renewcommand{\rc}{\rotgen{}{\circ}}
\renewcommand{\rz}{\rotgen{0}{}}
\newcommand{\rcz}{\rotgen{0}{\circ}}
\newcommand{\roms}{\r(\oms)}
\newcommand{\rcoms}{\rc(\oms)}
\newcommand{\rzoms}{\rz(\oms)}
\renewcommand{\d}{\divgen{}{}}
\newcommand{\dc}{\divgen{}{\circ}}
\newcommand{\dz}{\divgen{0}{}}
\newcommand{\dcz}{\divgen{0}{\circ}}
\newcommand{\dcoms}{\dc(\oms)}
\newcommand{\dczoms}{\dcz(\oms)}
\newcommand{\harmdeps}{\harm{}{\mathtt{D},\eps}{}}
\newcommand{\harmnmu}{\harm{}{\mathtt{N},\mu}{}}
\newcommand{\harmnepsoms}{\harm{}{\mathtt{N},\eps}{}(\oms)}
\renewcommand{\scpom}[2]{\scpsom{#1}{#2}}
\newcommand{\scpepsom}[2]{\scpsepsom{#1}{#2}}
\newcommand{\scpmuom}[2]{\scpsmuom{#1}{#2}}
\newcommand{\scpmumoom}[2]{\scpsmumoom{#1}{#2}}
\newcommand{\scpoms}[2]{\scpsoms{#1}{#2}}
\newcommand{\scpepsoms}[2]{\scpsepsoms{#1}{#2}}
\renewcommand{\norm}[1]{\normos{#1}}
\newcommand{\normepsom}[1]{\normosepsom{#1}}
\newcommand{\normmuom}[1]{\normosmuom{#1}}
\newcommand{\normepsmoom}[1]{\normosepsmoom{#1}}
\newcommand{\normmumoom}[1]{\normosmumoom{#1}}
\newcommand{\normoms}[1]{\norm{#1}_{\oms}}
\newcommand{\normepsoms}[1]{\normosepsoms{#1}}
\newcommand{\normmuoms}[1]{\normosmuoms{#1}}
\newcommand{\normmumooms}[1]{\normosmumooms{#1}}
\renewcommand{\dnorm}[1]{\norms{#1}}
\newcommand{\dnormrot}[1]{\dnorm{#1}_{\rot}}
\renewcommand{\tnorm}[1]{\tnorms{#1}}
\newcommand{\pic}{\overset{\circ}{\pi}}
\newcommand{\pioms}{\pi_{\oms}}
\begin{document}




\date{\today}
\maketitle{}

\begin{abstract}
This paper is concerned with the analysis and numerical analysis for the optimal control 
of first-order magneto-static equations. 
Necessary and sufficient optimality conditions are established through a rigorous Hilbert space approach. 
Then, on the basis of the optimality system, 
we prove functional a posteriori error estimators for the optimal control, 
the optimal state, and the adjoint state.
3D numerical results illustrating the theoretical findings are presented.\\

{\bf Keywords:} 
Maxwell's equations, magneto statics, optimal control, a posteriori error analysis
\end{abstract}


\section{Introduction}

Let $\emptyset\neq\oms\subset\om\subset\rt$ be bounded domains with boundaries $\gas:=\poms$, $\ga:=\pom$.
For simplicity, we assume that the boundaries $\gas$ and $\ga$ are Lipschitz
and satisfy $\dist(\gas,\ga)>0$, i.e., $\oms$ does not touch $\ga$.
Moreover, let material properties or constitutive laws $\eps,\mu:\om\to\rttt$ be given, 
which are symmetric, uniformly positive definite and belong to $\Liom$.
These assumptions are general throughout the paper. 
In our context, $\om$ denotes a large ``hold all'' computational domain. Therefore,
without loss of generality, we may assume that $\om$ is an open, bounded and convex set such as  a ball or a cube. 
On the other hand, the subdomain $\oms \subset \om$ represents a control region containing induction coils,  
where the applied current source control is acting. 
We underline that our analysis can be extended to the case, 
where $\oms$ is  non-connected with finite topology.

For a given desired   magnetic field $\Hh\in\Ltom$
and a given shift control $\eh\in\Ltoms$,
we look for the optimal applied current density  
in $\omega$ by solving the following minimization problem:
\begin{align}
\label{optcontprobintro}
\min_{\e\in\cEa}F(\e) 
:=\frac{1}{2}\int_{\om}\norm{\mu^{1/2}(H(\e)-\Hh)}^2 
+\frac{\ka}{2}\int_{\oms}\norm{\eps^{1/2}(\e-\eh)}^2,
\end{align}
where $H(\e)=H$ satisfies the first-order linear magneto-static boundary value problem:
\begin{align}
\label{constraint1}
\rot H&=\eps\pi(\zet\e+\J)&\text{in }\om,\\
\label{constraint2}
\div\mu H&=0&\text{in }\om,\\
\label{constraint3}
n\cdot\mu H&=0&\text{on }\ga,\\
\label{constraint4}
\mu H&\,\,\bot\harmnmuom.
\end{align}
In the setting of \eqref{optcontprobintro}, 
$\cEa$ denotes the admissible control set, which is assumed to be a nonempty and closed subspace of $\Ltoms$.  
Moreover, $\kappa>0$ is the control cost term, and $\J \in \Ltom$ represents a fixed external current density. 
In \eqref{constraint1}, we employ the extension by zero operator $\zet$ from $\oms$ to $\om$ 
as well as the $\Lt$-orthonormal projector $\pi$ onto the range of rotations. 
The precise definitions of these two operators will be given in next section.   
Furthermore, $\harmnmuom$ denotes the kernel of \eqref{constraint1}-\eqref{constraint3}, i.e.,
the set of all square integrable vector fields $H$ with
$\rot H=0$, $\div\mu H=0$ in $\om$ and $n\cdot\mu H=0$ on $\ga$,
where $n$ denotes the exterior unit normal to $\ga$.  
Let us also point out that \eqref{constraint1}-\eqref{constraint4} are understood in a weak sense.  

Using a rigorous Hilbert space approach for the state and adjoint state equations,
we derive necessary and sufficient optimality conditions for \eqref{optcontprobintro}.
Having established a  variational formulation for the corresponding optimality system,
we adjust this formulation for suitable numerical approximations and
prove functional a posteriori error estimates for the error in the optimal quantities based on
the spirit of Repin \cite{NeittaanmakiRepin2004,repinbookone}.
Finally, we propose a mixed formulation for computing the optimal control $\eb$
and   present some numerical results, which illustrate the efficiency of the proposed error estimator.

To the best of the authors' knowledge, this paper presents original contributions 
on the functional a posteriori error analysis for the optimal control 
of first-order magneto-static equations. 
We are only aware of the previous contributions \cite{hopyou14,xuzou16}  
on the residual a posteriori error analysis for optimal control problems 
based on the second-order magnetic vector potential formulation.  
For recent mathematical results in the optimal control of electromagnetic problems, 
we refer to \cite{kollang13a,kollang13b,nicstitro14,nicstitro15,troeval16,troyou12,you12a,you12b,you13}.

\section{Definitions and Preliminaries}

We do not distinguish in our notations between scalar functions or vector fields.
The standard $\Ltom$ inner product will be denoted by $\scpom{\,\cdot\,}{\,\cdot\,}$.
$\Ltepsom$ denotes $\Ltom$ equipped with the weighted inner product 
$\scpepsom{\,\cdot\,}{\,\cdot\,}:=\scpom{\eps\,\cdot\,}{\,\cdot\,}$
and for the respective norms we write $\normom{\,\cdot\,}$ and $\normepsom{\,\cdot\,}$.
All these definitions extend to $\mu$ as well as to $\oms$.
The standard Sobolev spaces and the corresponding Sobolev spaces for Maxwell's equations will be written as 
$\Hkom$ for $k\in\nz_{0}$ and
$$\rom:=\set{E\in\Ltom}{\rot E\in\Ltom},\quad
\dom:=\set{E\in\Ltom}{\div E\in\Ltom},$$
all equipped with the natural inner products and graph norms.
Moreover, for the sake of boundary conditions 
we define the Sobolev spaces $\Hkcom$ and $\rcom$, $\dcom$
as the closures of test functions or test vector fields from $\Cicom$ 
in the respective graph norms. A zero at the lower right corner of the Sobolev spaces
indicates a vanishing differential operator, e.g., 
$$\rzom=\set{E\in\rom}{\rot E=0},\quad\dczom=\set{E\in\dcom}{\div E=0}.$$
Furthermore, we introduce the spaces of Dirichlet and Neumann fields by
$$\harmdepsom:=\rczom\cap\epsmo\dzom,\quad\harmnmuom:=\rzom\cap\mumo\dczom.$$
All the defined spaces are Hilbert spaces
and all definitions extend to $\oms$ or generally to any domain as well.
We will omit the domain in our notations of the spaces if the underlying domain is $\om$.

It is well known that the embeddings
\begin{align}
\label{compemb}
\rc\cap\epsmo\d\hookrightarrow\Lt,\quad
\r\cap\epsmo\dc\hookrightarrow\Lt
\end{align}
are compact, see
\cite{weckmax,picardcomimb,picardweckwitschxmas,webercompmax,witschremmax,leisbook,jochmanncompembmaxmixbc,bauerpaulyschomburgmcpweaklip},
being a crucial point in the theory for Maxwell's equations.
By the compactness of the unit balls and a standard indirect argument we get immediately that
$\harmdeps$ and $\harmnmu$ are finite dimensional and that the well known 
Maxwell estimates, i.e., there exists $c>0$ such that
\begin{align}
\label{maxestrc}
\forall\,&E\in\rc\cap\epsmo\d\cap\harmdeps^{\bot_{\eps}}&
\normepsom{E}&\leq c\big(\normom{\rot E}^2+\normom{\div\eps E}^2\big)^{1/2},\\
\label{maxestr}
\forall\,&H\in\r\cap\mumo\dc\cap\harmnmu^{\bot_{\mu}}&
\normmuom{H}&\leq c\big(\normom{\rot H}^2+\normom{\div\mu H}^2\big)^{1/2},
\end{align}
hold, where $\bot$ resp. $\bot_{\eps}$ denotes orthogonality in $\Lt$ resp. $\Lteps$.
By the projection theorem and Hilbert space methods we have
$$\Lteps=\na\hoc\oplus_{\eps}\epsmo\dz=\rcz\oplus_{\eps}\epsmo\ol{\rot\r},\quad
\Ltmu=\na\ho\oplus_{\mu}\mumo\dcz=\rz\oplus_{\mu}\mumo\ol{\rot\rc}$$
with closures in $\Lt$. 
Here $\oplus$ resp. $\oplus_{\eps}$ denotes the orthogonal sum in $\Lt$ resp. $\Lteps$.
We note that by Rellich's selection theorem the ranges
$\na\hoc$ and $\na\ho$ are already closed. Therefore,
\begin{align}
\label{Helmholtzr}
\rc &=\rcz\oplus_{\eps}\big(\rc\cap\epsmo\ol{\rot\r}\big),&
\r&=\rz\oplus_{\mu}\big(\r\cap\mumo\ol{\rot\rc}\big)
\end{align}
and thus
\begin{align}
\label{Helmholtzrotr}
\rot\rc&=\rot\big(\rc\cap\epsmo\ol{\rot\r}\big),&
\rot\r&=\rot\big(\r\cap\mumo\ol{\rot\rc}\big)
\end{align}
hold. Since obviously $\ol{\rot\r}\subset\dz\cap\harmdeps^{\bot}$
and $\ol{\rot\rc}\subset\dcz\cap\harmnmu^{\bot}$,
we obtain by the Maxwell estimates \eqref{maxestrc} and \eqref{maxestr}
that all ranges of $\rot$ are also closed, i.e.,
$$\ol{\rot\rc}=\rot\rc=\rot\big(\rc\cap\epsmo\rot\r\big),\quad
\ol{\rot\r}=\rot\r=\rot\big(\r\cap\mumo\rot\rc\big).$$
Since $\na\hoc\subset\rcz$ and $\na\ho\subset\rz$ we have
$$\rcz=\na\hoc\oplus_{\eps}\harmdeps,\quad
\rz=\na\ho\oplus_{\mu}\harmnmu$$
and hence we get the general Helmholtz decompositions
\begin{align}
\label{Helmholtzgen}
\Lteps&=\na\hoc\oplus_{\eps}\harmdeps\oplus_{\eps}\epsmo\rot\r,&
\Ltmu&=\na\ho\oplus_{\mu}\harmnmu\oplus_{\mu}\mumo\rot\rc.
\end{align}
Note that we have analogously $\rot\rc\subset\dcz$ and $\rot\r\subset\dz$ and thus
$$\epsmo\dz=\epsmo\rot\r\oplus_{\eps}\harmdeps,\quad
\mumo\dcz=\mumo\rot\rc\oplus_{\mu}\harmnmu,$$
which gives again the Helmholtz decompositions \eqref{Helmholtzgen}.
At this point we introduce two orthonormal projectors 
\begin{align}
\label{projectordefinition}
\pi:\Lteps&\to\epsmo\rot\r\subset\Lteps,&
\pic:\Ltmu&\to\mumo\rot\rc\subset\Ltmu.
\end{align}
Note that the range of $\pi$ resp. $\pic$ equals $\epsmo\rot\r$ resp. $\mumo\rot\rc$
and that we have $\pi=\id$ resp. $\pic=\id$ on $\epsmo\rot\r$ resp. $\mumo\rot\rc$
and $\pi=0$ resp. $\pic=0$ on $\rcz$ resp. $\rz$.
Moreover, by \eqref{Helmholtzr} and \eqref{Helmholtzrotr} we see 
$\pi\rc=\rc\cap\epsmo\rot\r$ and $\pic\r=\r\cap\mumo\rot\rc$ and that
$\rot\pi E=\rot E$ and $\rot\pic H=\rot H$ hold 
for $E\in\rc$ and $H\in\r$. We also need the extension by zero operator
$$\Abb{\zet}{\Ltepsoms}{\Lteps}{\e}{\begin{cases}\e&\text{in }\oms\\0&\text{in }\om\setminus\ol{\oms}\end{cases}}.$$
Note that as orthonormal projectors $\pi:\Lteps\to\Lteps$ and $\pic:\Ltmu\to\Ltmu$ are selfadjoint
and that the adjoint of $\zet$ is the restriction operator $\zets=\,\cdot\,|_{\oms}:\Lteps\to\Ltepsoms$.
We also have $\zets\zet=\id$ on $\Ltepsoms$.
We emphasize that all our definitions and results from this section 
extend to $\oms$ or other domains as well.

For operators $\A$, here usually linear, 
we denote by $D(\A)$, $R(\A)$ and $N(\A)$ the domain of definition, the range 
and the kernel or null space of $\A$, respectively. 
For two Hilbert spaces $\hio$, $\hit$ and a densely defined and linear operator
$\A:D(\A)\subset\hio\to\hit$ we denote by $\As:D(\As)\subset\hit\to\hio$ 
its Hilbert space adjont.

\section{Functional Analytical Setting}

Let $\hio$, $\hit$ be two Hilbert spaces and let
\begin{align}
\label{Adef}
\A:D(\A)\subset\hio\to\hit
\end{align}
be a densely defined and closed linear operator with adjoint
\begin{align}
\label{Asdef}
\As:D(\As)\subset\hit\to\hio.
\end{align}
Equipping $D(\A)$ and $D(\As)$ with the respective graph norms
makes them Hilbert spaces.
By the projection theorem we have
\begin{align}
\label{projtheo1}
\hio&=N(\A)\oplus\ol{R(\As)},&D(\A)&=N(\A)\oplus\big(D(\A)\cap\ol{R(\As)}\big),\\
\label{projtheo2}
\hit&=N(\As)\oplus\ol{R(\A)},&D(\As)&=N(\As)\oplus\big(D(\As)\cap\ol{R(\A)}\big),
\end{align}
and
\begin{align}
\label{projtheo3}
N(\As)^{\bot_{\hit}}&=\ol{R(\A)},&R(\A)&=\A\big(D(\A)\cap\ol{R(\As)}\big),\\
\label{projtheo4}
N(\A)^{\bot_{\hio}}&=\ol{R(\As)},&R(\As)&=\As\big(D(\As)\cap\ol{R(\A)}\big).
\end{align}
Let us fix the crucial general assumption of this section:
The embedding
\begin{align}
\label{Agenass}
D(\A)\cap\ol{R(\As)}\hookrightarrow\hio
\end{align}
should be compact.

\begin{lem}
\label{Alem}
Assume \eqref{Agenass} holds. Then:
\begin{itemize}
\item[\bf(i)] 
$R(\A)$ and $R(\As)$ are closed.
\item[\bf(ii)] 
$\exists\,\ca>0\quad\forall\,x\in D(\A)\cap R(\As)\quad\norm{x}_{\hio}\leq\ca\norm{\A x}_{\hit}$
\item[\bf(ii')] 
$\exists\,\cas>0\quad\forall\,y\in D(\As)\cap R(\A)\quad\norm{y}_{\hit}\leq\cas\norm{\As y}_{\hio}$
\item[\bf(iii)] 
$D(\As)\cap R(\A)$ is compactly embedded into $\hit$.
\item[\bf(iii')] 
$D(\A)\cap R(\As)\hookrightarrow\hio\qequi D(\As)\cap R(\A)\hookrightarrow\hit$ 
\end{itemize}
\end{lem}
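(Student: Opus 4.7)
The proof hinges on a standard indirect compactness argument of Picard--Weber type, together with the closed range theorem. I would proceed in the order (ii) $\Rightarrow$ (i) $\Rightarrow$ (iii) $\Rightarrow$ (ii') $\Rightarrow$ (iii').

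First, I would prove the slightly strengthened Friedrichs--Poincar\'e type estimate
$$
\norm{x}_{\hio}\leq\ca\norm{\A x}_{\hit}\qquad\forall\,x\in D(\A)\cap\ol{R(\As)}.
$$
Assume for contradiction that no such constant exists and pick $x_n\in D(\A)\cap\ol{R(\As)}$ with $\norm{x_n}_{\hio}=1$ and $\A x_n\to 0$ in $\hit$. Then $(x_n)$ is bounded in the graph norm of $\A$, so by the compact embedding \eqref{Agenass} a subsequence converges to some $x\in\hio$. Since $\A$ is closed, $x\in D(\A)$ and $\A x=0$, hence $x\in N(\A)$. On the other hand, $x\in\ol{R(\As)}=N(\A)^{\bot_{\hio}}$ by \eqref{projtheo1}. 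Thus $x=0$, contradicting $\norm{x}_{\hio}=1$. This already gives (ii) provided we know $R(\As)$ is closed.

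To establish (i), let $\A x_n\to z$ in $\hit$. By \eqref{projtheo1} decompose $x_n=x_n^0+\tilde x_n$ with $x_n^0\in N(\A)$ and $\tilde x_n\in D(\A)\cap\ol{R(\As)}$, so that $\A\tilde x_n=\A x_n$ is Cauchy. The strengthened estimate above, applied to $\tilde x_n-\tilde x_m$, shows that $(\tilde x_n)$ is Cauchy in $\hio$, so $\tilde x_n\to\tilde x$. Closedness of $\A$ yields $\tilde x\in D(\A)$ with $\A\tilde x=z$, so $z\in R(\A)$ and $R(\A)$ is closed. By the closed range theorem $R(\As)$ is closed as well, so $\ol{R(\As)}=R(\As)=N(\A)^{\bot_{\hio}}$ and (ii) holds as stated.

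For (iii), let $(y_n)\subset D(\As)\cap R(\A)$ be bounded. By (i) and \eqref{projtheo3} we may write $y_n=\A x_n$ with $x_n\in D(\A)\cap R(\As)$, and (ii) gives $\norm{x_n}_{\hio}\leq\ca\norm{y_n}_{\hit}$, so $(x_n)$ is bounded in the graph norm of $\A$ and, being in $\ol{R(\As)}$, lies in a set that is relatively compact in $\hio$. Extract a subsequence with $x_n\to x$ in $\hio$. Using $y_n-y_m\in D(\As)$ we compute
$$
\norm{y_n-y_m}_{\hit}^{2}=\scp{\A(x_n-x_m)}{y_n-y_m}_{\hit}
=\scp{x_n-x_m}{\As(y_n-y_m)}_{\hio}
\leq 2C\,\norm{x_n-x_m}_{\hio},
$$
where $C$ bounds $\norm{\As y_n}_{\hio}$. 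Hence $(y_n)$ is Cauchy in $\hit$, proving the compact embedding (iii). Item (ii') then follows from (iii) by repeating the indirect argument above with the roles of $\A$ and $\As$ interchanged, noting $\A^{**}=\A$. Finally, (iii') is immediate from the same symmetry: the implication $D(\A)\cap\ol{R(\As)}\hookrightarrow\hio\Rightarrow D(\As)\cap\ol{R(\A)}\hookrightarrow\hit$ is exactly what we just proved, and the reverse implication follows by applying it to $\As$ in place of $\A$. The main technical point is the indirect compactness argument in the first step; everything else is bookkeeping via the closed range theorem and the identity $\norm{\A x}_{\hit}^{2}=\scp{x}{\As\A x}_{\hio}$.
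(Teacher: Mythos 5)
Your proof is correct and follows essentially the same route as the paper: prove the strengthened estimate on $D(\A)\cap\ol{R(\As)}$ by the indirect compactness argument, deduce closedness of $R(\A)$ (hence $R(\As)$ via the closed range theorem), establish (iii) through the Cauchy estimate $\norm{y_{n}-y_{m}}_{\hit}^{2}=\scp{x_{n}-x_{m}}{\As(y_{n}-y_{m})}_{\hio}$, and obtain (ii') and (iii') by symmetry. The only cosmetic difference is that you decompose an arbitrary sequence $x_n$ via the projection theorem, whereas the paper directly invokes \eqref{projtheo3} to choose representatives in $D(\A)\cap\ol{R(\As)}$; these are the same step.
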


The lemma is  standard, but for convenience we give a simple and short proof.\\

\begin{proof}
First we show
\begin{align}
\label{Aestproof}
\exists\,\ca>0\quad\forall\,x\in D(\A)\cap\ol{R(\As)}\quad\norm{x}_{\hio}\leq\ca\norm{\A x}_{\hit}.
\end{align}
Let us assume that this is wrong. Then, there exists a sequence
$(x_{n})\subset D(\A)\cap\ol{R(\As)}$ with $\norm{x_{n}}_{\hio}=1$ and $\norm{\A x}_{\hit}\to0$.
Hence, $(x_{n})$ is bounded in $D(\A)\cap\ol{R(\As)}$ and we can extract a subsequence, again denoted by $(x_{n})$,
with $x_{n}\xrightarrow{\hio}x\in\hio$. Since $\A$ is closed, 
$x$ belongs to $N(\A)\cap\ol{R(\As)}=\{0\}$, a contradiction,
because $1=\norm{x_{n}}_{\hio}\to\norm{x}_{\hio}=0$.

Now, let $y\in\ol{R(\A)}$, i.e., $y\in\ol{\A\big(D(\A)\cap\ol{R(\As)}\big)}$ by \eqref{projtheo3}.
Hence, there exists a sequence $(x_{n})$ in $D(\A)\cap\ol{R(\As)}$
with $\A x_{n}\xrightarrow{\hit}y$. By \eqref{Aestproof}, $(x_{n})$ is a Cauchy sequence in $D(\A)$
and thus $x_{n}\xrightarrow{D(\A)}x\in D(\A)$. Especially $\A x_{n}\to\A x$ implies $y=\A x\in R(\A)$.
Therefore, $R(\A)$ is closed. By the closed range theorem, 
see e.g. \cite[VII, 5]{yosidabook}, $R(\As)$ is closed as well. 
This proves (i) and together with \eqref{Aestproof} also (ii) is proved. 

Let $(y_{n})$ be a bounded sequence in $D(\As)\cap R(\A)$.
By \eqref{projtheo3}, $y_{n}\in\A\big(D(\A)\cap R(\As)\big)$
and there exists a sequence $(x_{n})\subset D(\A)\cap R(\As)$
with $\A x_{n}=y_{n}$. By (ii), $(x_{n})$ is bounded in $D(\A)\cap R(\As)$.
Hence, without loss of generality, $(x_{n})$ converges in $\hio$.
Then, for $x_{n,m}:=x_{n}-x_{m}$ and $y_{n,m}:=y_{n}-y_{m}$ we have
$$\norm{y_{n,m}}_{\hit}^2
=\scp{\A x_{n,m}}{y_{n,m}}_{\hit}
=\scp{x_{n,m}}{\As y_{n,m}}_{\hio}
\leq c\norm{x_{n,m}}_{\hio}.$$
Therefore, $(y_{n})$ is a Cauchy sequence in $\hit$, showing (iii).

Now, (ii') follows by (iii) analogously to the proof of (ii).
(iii') is clear by duality since $(\A,\As)$ is a `dual pair', i.e., $\A^{**}=\bar{\A}=\A$,
where $\bar{\A}$ denotes the closure of $\A$.
\end{proof}

\begin{rem}
\label{Aremconstants}
The best constants in Lemma \ref{Alem} (ii) and (ii') are even equal, i.e.,
$$\frac{1}{\ca}=\inf_{0\neq x\in D(\A)\cap R(\As)}\frac{\norm{\A x}_{\hit}}{\norm{x}_{\hio}}
=\inf_{0\neq y\in D(\As)\cap R(\A)}\frac{\norm{\As y}_{\hio}}{\norm{y}_{\hit}}=\frac{1}{\cas}.$$
See \cite[Theorem 2]{paulymaxconst2} and also \cite{paulymaxconst0,paulymaxconst1}.
\end{rem}

Since the decompositions \eqref{projtheo1} and \eqref{projtheo2} reduce $\A$ and $\As$,
we obtain that the adjoint of the reduced operator
\begin{align}
\label{cAdef}
\Abb{\cA}{D(\cA):=D(\A)\cap R(\As)\subset R(\As)}{R(\A)}{x}{\A x}
\end{align}
is given by the reduced adjoint operator
\begin{align}
\label{cAsdef}
\Abb{\cAs}{D(\cAs):=D(\As)\cap R(\A)\subset R(\A)}{R(\As)}{y}{\As y}.
\end{align}

We immediately get by Lemma \ref{Alem} the following.

\begin{lem}
\label{Alemmore}
It holds:
\begin{itemize}
\item[\bf(i)] 
$R(\cA)=R(\A)$ and $R(\cAs)=R(\As)$.
\item[\bf(ii)] 
$\cA$ and $\cAs$ are injective and
$\cA^{-1}:R(\A)\to D(\cA)$ and $(\cAs)^{-1}:R(\As)\to D(\cAs)$ continuous.
\item[\bf(ii')] 
As operators on $R(\A)$ and $R(\As)$,
$\cA^{-1}:R(\A)\to R(\As)$ and $(\cAs)^{-1}:R(\As)\to R(\A)$ are compact.
\end{itemize}
\end{lem}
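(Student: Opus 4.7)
The plan is to obtain each of the three items essentially as a bookkeeping consequence of Lemma \ref{Alem}, with no real new ingredient: the work has already been done in establishing closed range, Friedrichs-type estimates and compact embeddings for $\A$ and $\As$.

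For (i), I would start from the identity $R(\A)=\A\bigl(D(\A)\cap\ol{R(\As)}\bigr)$ recorded in \eqref{projtheo3}. Lemma \ref{Alem}(i) says $R(\As)$ is closed, so $\ol{R(\As)}=R(\As)$ and the right-hand side is precisely $\cA(D(\cA))=R(\cA)$. Hence $R(\cA)=R(\A)$, and the analogous argument based on \eqref{projtheo4} together with the closedness of $R(\A)$ yields $R(\cAs)=R(\As)$.

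For (ii), injectivity of $\cA$ follows from \eqref{projtheo1}: if $x\in D(\cA)=D(\A)\cap R(\As)$ satisfies $\cA x=\A x=0$, then $x\in N(\A)\cap R(\As)\subset N(\A)\cap N(\A)^{\bot_{\hio}}=\{0\}$. Continuity of $\cA^{-1}:R(\A)\to D(\cA)$ is a direct consequence of Lemma \ref{Alem}(ii): for $y=\cA x\in R(\A)$,
\[
\norm{\cA^{-1}y}_{D(\cA)}^{2}=\norm{x}_{\hio}^{2}+\norm{\A x}_{\hit}^{2}\leq (\ca^{2}+1)\,\norm{y}_{\hit}^{2}.
\]
The corresponding statements for $\cAs$ are proved identically, invoking Lemma \ref{Alem}(ii') in place of (ii) and $N(\As)\cap R(\A)=\{0\}$ from \eqref{projtheo2}.

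For (ii'), the point is simply to compose a bounded map with a compact embedding. By (ii), $\cA^{-1}:R(\A)\to D(\cA)$ is bounded, and $D(\cA)=D(\A)\cap R(\As)$ embeds compactly into $\hio$ by the standing assumption \eqref{Agenass}; since the image already lies in $R(\As)\subset\hio$, the induced map $\cA^{-1}:R(\A)\to R(\As)$ is compact. For $(\cAs)^{-1}:R(\As)\to R(\A)$ the same argument applies, with the required compact embedding $D(\cAs)=D(\As)\cap R(\A)\hookrightarrow\hit$ being exactly Lemma \ref{Alem}(iii). There is no genuine obstacle here; the only thing to be careful about is to distinguish between the graph norm on $D(\cA)$ (needed for boundedness of $\cA^{-1}$) and the ambient $\hio$-norm (needed for compactness), and to note that the closedness assertions from Lemma \ref{Alem}(i) are what make the reduction in \eqref{cAdef}--\eqref{cAsdef} consistent with \eqref{projtheo3}--\eqref{projtheo4}.
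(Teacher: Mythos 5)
Your proof is correct and follows exactly the route the paper intends, since the paper simply states that Lemma \ref{Alemmore} follows ``immediately'' from Lemma \ref{Alem} and the reduction \eqref{cAdef}--\eqref{cAsdef}; you have merely made the bookkeeping explicit, using \eqref{projtheo1}--\eqref{projtheo4}, the closed-range statement Lemma \ref{Alem}(i), the Friedrichs estimates (ii)/(ii'), and the compact embeddings \eqref{Agenass} and Lemma \ref{Alem}(iii).
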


Let us now transfer these results to Maxwell's equations.
We set $\hio:=\Lteps$ and $\hit:=\Ltmu$. It is well known that
\begin{align*}
&\Abb{\A}{D(\A)\subset\Lteps}{\Ltmu}{E}{\mumo\rot E},&
D(\A)&:=\rc,&
R(\A)&=\mumo\rot\rc,
\intertext{is a densely defined and closed linear operator with adjoint}
&\Abb{\As}{D(\As)\subset\Ltmu}{\Lteps}{H}{\epsmo\rot H},&
D(\As)&\,=\r,&
R(\As)&=\epsmo\rot\r.
\end{align*}
By e.g. the first compact embedding of \eqref{compemb}, i.e,
$\rc\cap\epsmo\d\hookrightarrow\Lt$, we get \eqref{Agenass}, i.e., 
$$\rc\cap\ol{\epsmo\rot\r}\subset\rc\cap\epsmo\dz\subset\rc\cap\epsmo\d\hookrightarrow\Lteps.$$
Hence, $\rot\rc$ and $\rot\r$ are closed and we have the Maxwell estimates
\begin{align}
\label{maxrotestone}
\forall\,&E\in\rc\cap\epsmo\rot\r&
\normepsom{E}&\leq\ca\normmuom{\mumo\rot E},\\
\label{maxrotesttwo}
\forall\,&H\in\r\cap\mumo\rot\rc&
\normmuom{H}&\leq\cas\normepsom{\epsmo\rot H}.
\end{align}
\eqref{projtheo1}-\eqref{projtheo4} provide partially 
the Helmholtz decompositions from the latter section, i.e,
\begin{align*}
\Lteps&=\rcz\oplus_{\eps}\epsmo\rot\r,&\rc&=\rcz\oplus_{\eps}\big(\rc\cap\epsmo\rot\r\big),\\
\Ltmu&=\rz\oplus_{\mu}\mumo\rot\rc,&\r&=\rz\oplus_{\mu}\big(\r\cap\mumo\rot\rc\big),\\
\rz^{\bot_{\mu}}&=\mumo\rot\rc,&\mumo\rot\rc&=\mumo\rot\big(\rc\cap\epsmo\rot\r\big),\\
\rcz^{\bot_{\eps}}&=\epsmo\rot\r,&\epsmo\rot\r&=\epsmo\rot\big(\r\cap\mumo\rot\rc\big).
\end{align*}
The injective operators $\cA$ and $\cAs$ are
\begin{align*}
&\Abb{\cA}{D(\cA)\subset\epsmo\rot\r}{\mumo\rot\rc}{E}{\mumo\rot E},&
D(\cA)&:=\rc\cap\epsmo\rot\r,\\
&\Abb{\cAs}{D(\cAs)\subset\mumo\rot\rc}{\epsmo\rot\r}{H}{\epsmo\rot H},&
D(\cAs)&\,=\r\cap\mumo\rot\rc
\end{align*}
with 
$$R(\cA)=R(\A)=\mumo\rot\rc=R(\pic),\qquad
R(\cAs)=R(\As)=\epsmo\rot\r=R(\pi).$$
The inverses 
\begin{align*}
\cA^{-1}:\mumo\rot\rc&\to\rc\cap\epsmo\rot\r,&
(\cAs)^{-1}:\epsmo\rot\r&\to\r\cap\mumo\rot\rc,\\
\cA^{-1}:\mumo\rot\rc&\to\epsmo\rot\r,&
(\cAs)^{-1}:\epsmo\rot\r&\to\mumo\rot\rc
\end{align*}
are continuous and compact, respectively.
We note again that both $D(\cA)$ and $D(\cAs)$ are compactly embedded into $\Lt$.

\section{The Optimal Control Problem}

We start by formulating our optimal control problem 
\eqref{optcontprobintro}-\eqref{constraint4} in a proper Hilbert space setting.
As mentioned in the introduction, the admissible control set 
 $\cEa$ is assumed to be 
a nonempty and closed subspace of $\Ltepsoms$.
For some given $\J\in\Lteps$, $\Hh\in\Ltmu$ and $\eh\in\Ltepsoms$ let us define
\begin{align}
\label{cEprojdef}
\pioms:\Ltepsoms\to\cE,
\end{align}
the $\Ltepsoms$ orthonormal projector onto $\cE$.
Moreover, we introduce the norm $\tnorm{\,\cdot\,}$ by
$$\tnorm{(\Phi,\phi)}^2
:=\normmuom{\Phi}^2
+\ka\normepsoms{\phi}^2,\quad
(\Phi,\phi)\in\Ltmu\times\Ltepsoms,$$
and the quadratic functional $F$ by
\begin{align}
\label{functdef}
\Abb{F}{\Ltepsoms}{[0,\infty)}{\e}{\ds\frac{1}{2}\tnorm{(H(\e)-\Hh,\e-\eh)}^2},
\end{align}
i.e.,
$$F(\e)
=\frac{1}{2}\tnorm{(H(\e)-\Hh,\e-\eh)}^2
=\frac{1}{2}\normmuom{H(\e)-\Hh}^2+\frac{\ka}{2}\normepsoms{\e-\eh}^2,$$
where $H=H(\e)$ is the unique solution of the magneto static problem \eqref{constraint1}-\eqref{constraint4},
which can be formulated as
\begin{align}
\label{Hdef}
H&\in\r\cap\big(\mumo\rot\rc\big),&
\epsmo\rot H&=\pi(\zet\e+\J).
\end{align}
We note that by $\pi(\zet\e+\J)\in\epsmo\rot\r$ and by \eqref{Helmholtzrotr}, i.e.,
$\rot\r=\rot\big(\r\cap\mumo\rot\rc\big)$, \eqref{Hdef} is solvable
and the solution is unique, since 
$$\rz\cap\big(\mumo\rot\rc\big)=\rz\cap\mumo\dcz\cap\harmnmu^{\bot_{\mu}}
=\harmnmu\cap\harmnmu^{\bot_{\mu}}=\{0\}.$$
Moreover, the solution operator,
mapping the pair $(\e,\J)\in\Ltepsoms\times\Lteps$ to 
$H\in\r\cap\big(\mumo\rot\rc\big)$,
is continuous since by \eqref{maxestr} or \eqref{maxrotesttwo}
(with generic constants $c>0$)
$$\norm{H}_{\r}
=\big(\normom{H}^2+\normom{\rot H}^2\big)^{1/2}
\leq c\normepsom{\pi(\zet\e+\J)}
\leq c\normepsom{\zet\e+\J}
\leq c\big(\normepsoms{\e}+\normepsom{\J}\big).$$
We note that the unique solution is given by $H:=H(\e):=(\cAs)^{-1}\pi(\zet\e+\J)$
depending affine linearly and continuously on $\e\in\Ltepsoms$.

Now, our optimal control problem 
\eqref{optcontprobintro}-\eqref{constraint4} reads as follows:
Find $\eb\in\cEa$, such that
\begin{align}
\label{optcontprob1}
F(\eb)=\min_{\e\in\cEa}F(\e),
\end{align}
subject to $H(\e)\in\r\cap\big(\mumo\rot\rc\big)$ and $\epsmo\rot H(\e)=\pi(\zet\e+\J)$.
Another equivalent formulation using the Hilbert space operators from the latter section
and $R(\pi)=\epsmo\rot\r=R(\cAs)$ is:
Find $\eb\in\cEa$, such that
\begin{align}
\label{optcontprob2}
F(\eb)=\min_{\e\in\cEa}F(\e),
\end{align}
subject to $H(\e)\in D(\cAs)$ and $\cAs H(\e)=\pi(\zet\e+\J)$.
Our last formulation is:
Find $\eb\in\cEa$, such that
\begin{align}
\label{optcontprob3}
F(\eb)=\min_{\e\in\cEa}F(\e),\quad
F(\e)=\frac{1}{2}\normmuom{(\cAs)^{-1}\pi(\zet\e+\J)-\Hh}^2
+\frac{\ka}{2}\normepsoms{\e-\eh}^2.
\end{align}

Let us now focus on the latter formulation \eqref{optcontprob3}.
Since $(\cAs)^{-1}\pi(\zet\e+\J)\in R(\A)=R(\pic)$ 
and $\e\in R(\pioms)=\cEa$ we have
$$F(\e)
=\frac{1}{2}\normmuom{(\cAs)^{-1}\pi(\zet\e+\J)-\pic\Hh}^2
+\frac{\ka}{2}\normepsoms{\e-\pioms\eh}^2
+\frac{1}{2}\normmuom{(1-\pic)\Hh}^2
+\frac{\ka}{2}\normepsoms{(1-\pioms)\eh}^2$$
and hence we may assume from now on without loss of generality
\begin{align}
\label{genass}
\begin{split}
\Hh&=\pic\Hh\in R(\A)=R(\pic)=\mumo\rot\rc,\quad
\J=\pi\J\in R(\As)=R(\pi)=\epsmo\rot\r,\\
\eh&=\pioms\eh\in R(\pioms)=\cEa.
\end{split}
\end{align}

\begin{lem}
\label{optcontexistandunique}
The optimal control problem \eqref{optcontprob3} admits a unique solution
$\eb\in\cEa$. Moreover, $\eb\in\cEa$ is the unique solution of 
  \eqref{optcontprob3},
if and only if $\eb\in\cEa$ is the unique solution of $F'(\eb)=0$.
\end{lem}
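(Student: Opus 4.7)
The plan is to recognize that $F$ is a continuous, strictly convex, coercive quadratic functional on the Hilbert space $\Ltepsoms$, constrained to the nonempty, closed, convex (in fact, linear) subspace $\cEa$. This puts us in the classical setting of Hilbert space optimization. First I would unpack the structure of $F$: using that $\e\mapsto H(\e)=(\cAs)^{-1}\pi(\zet\e+\J)$ is affine linear and continuous (the continuity estimate was recorded right after \eqref{Hdef}), I can write
\begin{equation*}
F(\e)=\tfrac{1}{2}a(\e,\e)-\ell(\e)+\mathrm{const},
\end{equation*}
where $a$ is a symmetric, bounded, bilinear form on $\Ltepsoms$ and $\ell$ is a bounded linear functional. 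The key quantitative point is that the control-cost term $\tfrac{\ka}{2}\normepsoms{\e-\eh}^2$ with $\ka>0$ makes $a$ coercive in the sense $a(\e,\e)\ge\ka\normepsoms{\e}^2$, which is what drives both uniqueness and existence.

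For existence, I would apply the direct method: pick a minimizing sequence $(\e_n)\subset\cEa$; coercivity of $F$ yields boundedness in $\Ltepsoms$, so up to a subsequence $\e_n\rightharpoonup\eb$ weakly; since $\cEa$ is closed and convex (hence weakly closed), $\eb\in\cEa$; and since $F$ is convex and continuous, it is weakly lower semicontinuous, so $F(\eb)\le\liminf F(\e_n)=\inf_{\cEa}F$. Uniqueness follows from strict convexity: two distinct minimizers $\eb_1\ne\eb_2$ would force $F\big(\tfrac{1}{2}(\eb_1+\eb_2)\big)<\tfrac{1}{2}F(\eb_1)+\tfrac{1}{2}F(\eb_2)=\inf_{\cEa}F$, contradicting $\tfrac{1}{2}(\eb_1+\eb_2)\in\cEa$.

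For the optimality characterization, since $F$ is a continuous quadratic functional it is Fréchet differentiable on $\Ltepsoms$, and strict convexity implies that $\eb\in\cEa$ minimizes $F$ over $\cEa$ iff the variational inequality $F'(\eb)(\phi-\eb)\ge 0$ holds for all $\phi\in\cEa$. Exploiting that $\cEa$ is a \emph{linear} subspace, I may insert $\phi=\eb\pm t\psi$ with arbitrary $\psi\in\cEa$ and $t>0$, which collapses the inequality to the equality $F'(\eb)\psi=0$ for all $\psi\in\cEa$, i.e.\ $F'(\eb)=0$ on $\cEa$, the interpretation intended by the statement. Conversely, for strictly convex $F$, a critical point on $\cEa$ is automatically the global minimizer, giving the equivalence. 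The only real subtlety — and I would flag it but not dwell on it here since the concrete form of $F'(\eb)$ will be needed only in the subsequent section — is making the correct functional-analytic identification of the derivative, which reduces to a routine computation once $H(\e)$ is expressed via the bounded operator $(\cAs)^{-1}\pi\zet$.
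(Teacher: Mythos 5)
Your proposal is correct and takes essentially the same route as the paper. The paper's proof is one line: it observes that $(\cAs)^{-1}\pi\zet$ is linear and continuous, that $F$ is convex and differentiable, and that $\cEa$ is a nonempty closed subspace, and then declares that the assertions "follow immediately." What you write out — decomposing $F$ into a quadratic part plus linear part, extracting coercivity from the control-cost term $\ka>0$, running the direct method for existence, using strict convexity for uniqueness, and reducing the variational inequality to the equality $F'(\eb)\psi=0$ for $\psi\in\cEa$ because $\cEa$ is a linear subspace — is exactly the standard machinery the paper invokes implicitly. One small improvement you make: the paper says only "convex," but as you correctly note, it is \emph{strict} convexity (coming from $\ka>0$) that is actually needed for uniqueness and for the equivalence between minimizers and critical points, so your version is slightly more careful than the printed one.
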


\begin{proof}
$(\cAs)^{-1}\pi\zet$ is linear and continuous and $F$ 
is convex and differentiable.
Since $\emptyset\neq\cEa$ is a closed subspace, the assertions follow immediately.
\end{proof}

Let us compute the derivative.
Since $(\cAs)^{-1}\pi\zet$ is linear and continuous we have for all $\e,\h\in\Ltepsoms$
\begin{align*} 
F'(\e)\h
&=\scpmuom{(\cAs)^{-1}\pi(\zet\e+\J)-\Hh}{(\cAs)^{-1}\pi\zet\h}+\ka\scpepsoms{\e-\eh}{\h}\\
&=\scpepsoms{\zets\pi\cA^{-1}((\cAs)^{-1}\pi(\zet\e+\J)-\Hh)+\ka(\e-\eh)}{\h}\\
&=\scpepsoms{\zets\cA^{-1}((\cAs)^{-1}\pi(\zet\e+\J)-\Hh)+\ka(\e-\eh)}{\h}.
\intertext{Hence, for all $\e,\h\in\cE$, we have}
F'(\e)\h
&=\scpepsoms{\zets\cA^{-1}((\cAs)^{-1}\pi(\zet\e+\J)-\Hh)+\ka(\e-\eh)}{\pioms\h}\\
&=\scpepsoms{\pioms\zets\cA^{-1}((\cAs)^{-1}\pi(\zet\e+\J)-\Hh)+\ka\pioms(\e-\eh)}{\h}\\
&=\scpepsoms{\pioms\zets\cA^{-1}((\cAs)^{-1}\pi(\zet\e+\J)-\Hh)+\ka(\e-\eh)}{\h}.
\end{align*}

In view of this formula and Lemma \ref{optcontexistandunique}, 
we obtain the following necessary and sufficient optimality system:

\begin{theo}
\label{optconttheo}
$\eb\in\cEa$ is the unique optimal control of \eqref{optcontprob3},
if and only if $(\eb,\Hb,\Eb)\in\cEa\times D(\cAs)\times D(\cA)$ 
is the unique solution of 
\begin{equation}
\label{varineq1}
\eb=\eh-\frac{1}{\ka}\pioms\zets\Eb,\quad
\Eb=\cA^{-1}(\Hb-\Hh),\quad
\Hb=(\cAs)^{-1}\pi(\zet\eb+\J).
\end{equation}
\end{theo}



\begin{rem}
\label{optconttheorem}
The latter optimality system \eqref{varineq1} is equivalent to the following system: Find 
$(\eb,\Hb,\Eb)$ in $\cEa\times(\r\cap\mumo\rot\rc)\times(\rc\cap\epsmo\rot\r)$ such that
\begin{align*}
\rot\Hb&=\eps\pi\zet\eb+\eps\J,&\rot\Eb&=\mu(\Hb-\Hh)&\text{in }\om,\\
\div\mu\Hb&=0,&\div\eps\Eb&=0&\text{in }\om,\\
n\cdot\mu\Hb&=0,&n\times\Eb&=0&\text{on }\ga,\\
\mu\Hb&\,\,\bot\harmnmu,&\eps\Eb&\,\,\bot\harmdeps
\end{align*}
and $\eb=\eh-\frac{1}{\ka}\pioms\zets\Eb$.
\end{rem}

Now, we have different options to specify the projector $\pioms:\Ltepsoms\to\cE$.
The only restriction is that $\cE=\pioms\Ltepsoms$
is a nonempty and closed subspace of $\Ltepsoms$.
Let us recall suitable Helmholtz decompositions for $\Ltepsoms$
\begin{align}
\label{Helmdecooms}
\begin{split}
\Ltepsoms
=\rzoms\oplus_{\eps}\epsmo\rot\rcoms
&=\na\hooms\oplus_{\eps}\epsmo\dczoms\\
&=\na\hooms\oplus_{\eps}\harmnepsoms\oplus_{\eps}\epsmo\rot\rcoms.
\end{split}
\end{align}
For example, we can choose
\begin{itemize}
\item[\bf(i)] $\pioms=\id_{\Ltepsoms}$,
\item[\bf(ii)] $\pioms:\Ltepsoms\to\epsmo\rot\rcoms\subset\Ltepsoms$, 
the $\Ltepsoms$-orthonormal projector onto $\epsmo\rot\rcoms$ in the Helmholtz decompositions \eqref{Helmdecooms},
\item[\bf(iii)] $\pioms:\Ltepsoms\to\epsmo\dczoms\subset\Ltepsoms$, 
the $\Ltepsoms$-orthonormal projector onto $\epsmo\dczoms$ in the Helmholtz decompositions \eqref{Helmdecooms}.
\end{itemize}

For physical and numerical reasons it makes sense to choose (iii), i.e.,
\begin{align}
\label{piomschoice}
\pioms:\Ltepsoms\to\epsmo\dczoms=:\cE,
\end{align}
which will be assumed from now on.
We note that all our subsequent results hold for the choice (ii) as well.
Now, we derive an equation for the adjoint state $\Eb$.
By Theorem \ref{optconttheo}, $\Eb$
and our optimal control $\eb=\eh-\kamo\pioms\zets\Eb$ satisfy
for all $\Phi\in D(\A)$
\begin{align}
\label{vareqEneb1}
\begin{split}
\scpmuom{\A\Eb}{\A\Phi}
&=\scpmuom{\Hb-\Hh}{\A\Phi}
=\scpepsom{\As\Hb}{\Phi}
-\scpmuom{\Hh}{\A\Phi}\\
&=\scpepsom{\pi\zet\eb}{\Phi}
+\scpepsom{\J}{\Phi}
-\scpmuom{\Hh}{\A\Phi}.
\end{split}
\end{align}
Note that, in case of $\Phi\in D(\cA)\subset R(\As)=R(\pi)$ we can skip the projector $\pi$, i.e.,
$$\scpepsom{\pi\zet\eb}{\Phi}
=\scpepsom{\zet\eb}{\pi\Phi}
=\scpepsom{\zet\eb}{\Phi}
=\scpepsoms{\eb}{\zets\Phi}
=\scpepsoms{\eh}{\zets\Phi}
-\frac{1}{\ka}\scpepsoms{\pioms\zets\Eb}{\zets\Phi}.$$
Hence, for all $\Phi\in D(\cA)$
\begin{align}
\label{varformcAeq}
\scpmuom{\A\Eb}{\A\Phi}
+\frac{1}{\ka}\scpepsoms{\pioms\zets\Eb}{\pioms\zets\Phi}
=\scpepsoms{\eh}{\zets\Phi}
+\scpepsom{\J}{\Phi}
-\scpmuom{\Hh}{\A\Phi}.
\end{align}

\begin{rem}
\label{varformcA}
The latter variational formulation \eqref{varformcAeq}
admits a unique solution $E$ in $D(\cA)$ depending continuously on $\J$, $\Hh$ and $\eh$, i.e.,
$\norm{E}_{D(\A)}\leq c(\normom{\Hh}+\normoms{\eh}+\normom{\J})$.
This is clear by the Lax-Milgram lemma, since
the left hand side is coercive over $D(\cA)$, i.e., 
by Lemma \ref{Alem} (ii) for all $E\in D(\cA)$
$$\normmuom{\A E}^2
+\kamo\normepsoms{\pioms\zets E}^2
\geq\normmuom{\A E}^2
\geq c\norm{E}_{D(\A)}^2.$$
\end{rem}

For numerical reasons, it is not practical to work in $D(\cA)=D(\A)\cap R(\As)$. 
On the other hand, it is important to get rid of $\pi$ since
the numerical implementation of $\pi$ is a difficult task.
Fortunately, due to the choice of $\cE$ we have:

\begin{lem}
\label{withoutpi}
$\pi\zet\pioms=\zet\pioms$
\end{lem}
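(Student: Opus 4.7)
The plan is to verify the pointwise identity $\pi\zet\pioms\phi=\zet\pioms\phi$ for every $\phi\in\Ltepsoms$. Setting $\psi:=\pioms\phi\in\cE=\epsmo\dczoms$, this reduces to showing $\zet\psi\in R(\pi)=\epsmo\rot\r$, since $\pi$ acts as the identity on its range.

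By the decomposition $\Lteps=\rcz\oplus_{\eps}\epsmo\rot\r$ recalled in Section~2, one has $R(\pi)=(\rcz)^{\bot_{\eps}}$, so it is enough to verify $\scpepsom{\zet\psi}{F}=0$ for every $F\in\rcz$. The standing assumption that $\om$ is convex makes $\om$ simply connected with connected boundary, so the Dirichlet harmonic fields are trivial, $\harmdeps=\{0\}$. Combined with $\rcz=\na\hoc\oplus_{\eps}\harmdeps$, this yields $\rcz=\na\hoc$, and it suffices to test $\zet\psi$ against $\na v$ for $v\in\hoc$.

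For such $v$, Sobolev restriction gives $v|_{\oms}\in\hooms$, and, because $\zet$ extends by zero outside $\oms$,
$$
\scpepsom{\zet\psi}{\na v}=\int_{\oms}\eps\psi\cdot\na(v|_{\oms})=\scpepsoms{\psi}{\na(v|_{\oms})}=0,
$$
the last equality following from the Helmholtz decomposition $\Ltepsoms=\na\hooms\oplus_{\eps}\epsmo\dczoms$ (recalled in Section~4 for $\oms$), since $\psi\in\epsmo\dczoms$ is $\eps$-orthogonal to $\na\hooms$ by construction. Hence $\zet\psi\in R(\pi)$, giving $\pi\zet\psi=\zet\psi$, which is the claim.

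The potentially delicate point is the use of $\harmdeps=\{0\}$, which rests on the convexity of $\om$; without this topological assumption one would additionally need to annihilate the term $\scpepsoms{\psi}{h|_{\oms}}$ for $h\in\harmdeps$, which is not obviously zero since the restriction $h|_{\oms}$ is only curl-free on $\oms$ and a priori need not be a gradient nor satisfy any boundary condition on $\poms$. Every remaining step is immediate from the definitions and the Helmholtz decompositions already collected in the preliminaries.
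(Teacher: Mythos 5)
Your argument is correct under the convexity of $\om$: you test $\zet\pioms\phi$ against $\rcz=\na\hoc$ (using $\harmdeps=\{0\}$), restrict the test gradient to $\oms$, and invoke the $\eps$-orthogonality $\epsmo\dczoms\perp_{\eps}\na\hooms$. Every step checks out, and your closing remark correctly pinpoints where topology enters.

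However, this is a genuinely different route from the paper's, and the difference is not cosmetic. The paper proves the lemma \emph{without} assuming $\om$ convex (indeed, the blanket convexity hypothesis is only invoked at the start of Section~5, after this lemma). The paper's argument is direct and constructive: starting from $\eps\e\in\dczoms$, extend by zero first to $\om$ and then to a ball $B\supset\om$, obtaining $\zet_{B}\zet\eps\e\in\dcz(B)$; since $B$ is simply connected, $\dcz(B)=\rot\rc(B)$, so $\zet_{B}\zet\eps\e=\rot E$ for some $E\in\rc(B)$; restricting $E$ to $\om$ exhibits $\zet\eps\e\in\rot\r$, i.e.\ $\zet\e\in R(\pi)$. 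This exhibits a vector potential rather than arguing by orthogonality, and it is indifferent to the topology of $\om$ and of $\oms$ --- which matters because the introduction explicitly allows $\oms$ non-connected with finite topology. Your approach is shorter but purchases its economy with the hypothesis $\harmdeps=\{0\}$, and, as you yourself note, would require additional work to handle the harmonic Dirichlet fields if $\om$ were not convex (and, further, the restriction $h|_{\oms}$ of such a field need not be a gradient when $\oms$ has nontrivial topology). In short: your proof is a valid alternative \emph{given} convexity, while the paper's ball-extension argument proves the lemma in the generality in which it is actually stated.
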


Note that this lemma would fail with the option (i) for $\pioms$.\\

\begin{proof}
Let $\e\in R(\pioms)=\epsmo\dczoms$.
Then, for any ball $B$ with $\om\subset B$
we have $\zet\eps\e\in\dcz$
and hence $\zet_{B}\zet\eps\e\in\dcz(B)$,
where $\zet_{B}$ denotes the extension by zero from $\om$ to $B$.
As $B$ is simply connected, there are no Neumann fields in $B$
yielding $\dcz(B)=\rot\rc(B)$. Thus, there exists $E\in\rc(B)$
with $\rot E=\zet_{B}\zet\eps\e$. But then the restriction 
$\zet_{B}^{*}E$ belongs to $\r$ and we have
$\rot\zet_{B}^{*}E=\zet_{B}^{*}\rot E=\zet\eps\e$
showing $\zet\e\in\epsmo\rot\r=R(\pi)$.
Hence, $\pi\zet\e=\zet\e$,
finishing the proof.
\end{proof}

Utilizing Lemma \ref{withoutpi} and $\eb\in R(\pioms)$
we obtain $\pi\zet\eb=\zet\eb$. 
Therefore, \eqref{vareqEneb1} turns into
\begin{align}
\nonumber
\forall\,\Phi&\in D(\A)&
\scpmuom{\A\Eb}{\A\Phi}
-\scpepsom{\zet\eb}{\Phi}
&=\scpepsom{\J}{\Phi}
-\scpmuom{\Hh}{\A\Phi}
\intertext{or equivalently with $\scpepsom{\zet\eb}{\Phi}=\scpepsoms{\eb}{\zets\Phi}$}
\nonumber
\forall\,\Phi&\in D(\A)&
\scpmuom{\A\Eb}{\A\Phi}
+\frac{1}{\ka}\scpepsoms{\pioms\zets\Eb}{\zets\Phi}
&=\scpepsoms{\eh}{\zets\Phi}
+\scpepsom{\J}{\Phi}
-\scpmuom{\Hh}{\A\Phi}.
\intertext{Hence, we obtain the following symmetric variational formulation for $\Eb\in D(\cA)$}
\label{vareqEneb2}
\forall\,\Phi&\in D(\A)&
\scpmuom{\A\Eb}{\A\Phi}
+\frac{1}{\ka}\scpepsoms{\pioms\zets\Eb}{\pioms\zets\Phi}
&=\scpepsom{\zet\eh+\J}{\Phi}
-\scpmuom{\Hh}{\A\Phi}.
\end{align}
By $\scpepsoms{\pioms\zets\Eb}{\pioms\zets\Phi}=\scpepsom{\zet\pioms\zets\Eb}{\Phi}$
and \eqref{vareqEneb2} we get immediately
$$\A\Eb+\Hh\in D(\As),\quad
\As(\A\Eb+\Hh)=\zet(\eh-\frac{1}{\ka}\pioms\zets\Eb)+\J.$$
Therefore, if $\Hh\in D(\As)$, then $\A\Eb\in D(\As)$
and we obtain in $\om$ the strong equation
\begin{align}
\label{strongeqEb}
\As\A\Eb+\frac{1}{\ka}\zet\pioms\zets\Eb
&=\zet\eh+\J-\As\Hh.
\end{align}
Translated to the PDE language \eqref{vareqEneb2} and \eqref{strongeqEb} 
read as follows: $\Eb\in\rc\cap\epsmo\rot\r$ with
\begin{align}
\label{vareqEneb2pde}
\forall\,\Phi&\in\rc&
\scpmumoom{\rot\Eb}{\rot\Phi}
+\frac{1}{\ka}\scpepsoms{\pioms\zets\Eb}{\pioms\zets\Phi}
&=\scpepsom{\zet\eh+\J}{\Phi}
-\scpom{\Hh}{\rot\Phi}
\end{align}
or, if $\Hh\in\r$,
\begin{align}
\label{strongeqEbpde}
\rot\mumo\rot\Eb+\frac{1}{\ka}\eps\zet\pioms\zets\Eb
&=\eps\zet\eh+\eps\J-\rot\Hh.
\end{align}

\begin{theo}
\label{equivareq}
For $\eb\in\Ltepsoms$ the following statements are equivalent:
\begin{itemize}
\item[\bf(i)] $\eb\in\cEa$ is the unique optimal control 
of the optimal control problem \eqref{optcontprob3}.
\item[\bf(ii)] $\eb$ is the unique solution of the optimality system 
$$\eb=\eh-\frac{1}{\ka}\pioms\zets\Eb,\quad
\Eb=\cA^{-1}(\Hb-\Hh),\quad
\Hb=(\cAs)^{-1}(\zet\eb+\J).$$
We note $\zet\eb=\pi\zet\eb$ by Lemma \ref{withoutpi} and $\eb\in\cEa$.
\item[\bf(iii)] $\eb=\eh-\kamo\pioms\zets\Eb$ 
and $\Eb\in D(\cA)$ satisfies \eqref{vareqEneb2}, i.e.,
$$\forall\,\Phi\in D(\A)\qquad
\scpmuom{\A\Eb}{\A\Phi}
+\frac{1}{\ka}\scpepsoms{\pioms\zets\Eb}{\pioms\zets\Phi}
=\scpepsom{\zet\eh+\J}{\Phi}
-\scpmuom{\Hh}{\A\Phi}.$$
\end{itemize}
By (iii), \eqref{vareqEneb2} is uniquely solvable.
\end{theo}

\begin{proof}
By Theorem \ref{optconttheo} we have (i)$\equi$(ii). 
Moreover, (ii)$\impl$(iii) follows from the previous considerations.
Hence, it remains to show (iii)$\impl$(ii).
For this, let $\e:=\eh-\kamo\pioms\zets E\in\cEa$ 
with $E\in D(\cA)$ satisfying
$$\forall\,\Phi\in D(\A)\quad
\scpmuom{\A E}{\A\Phi}
+\frac{1}{\ka}\scpepsoms{\pioms\zets E}{\pioms\zets\Phi}
=\scpepsom{\zet\eh+\J}{\Phi}
-\scpmuom{\Hh}{\A\Phi}.$$
Hence
$$H:=\A E+\Hh\in D(\As)\cap R(\A)=D(\cAs),\quad
\As H=\zet(\eh-\kamo\pioms\zets E)+\J.$$
Thus, $E\in D(\cA)$ solves $\A E=H-\Hh$ and
$H\in D(\cAs)$ solves $\As H=\zet\e+\J$.
Therefore, $E=\cA^{-1}(H-\Hh)$ and $H=(\cAs)^{-1}(\zet\e+\J)$,
so the tripple $(\e,E,H)$ solves the optimality system (ii), yielding $\e=\eb$.
\end{proof}

\section{Suitable Variational Formulations}
\label{secvarfrom}

Let us summarize the results optioned so far
and introduce some new notation.
We recall our choice \eqref{piomschoice}, i.e.,
$$\pioms:\Ltepsoms\to\epsmo\dczoms=\cE,$$
and the related Helmholtz decomposition
\begin{align}
\label{helmoms}
\Ltepsoms
=\na\hooms\oplus_{\eps}\cE.
\end{align}
Our aim is still to find and compute the optimal control $\eb\in\cEa$, such that
\begin{align}
\label{optcontprob4}
F(\eb)=\min_{\e\in\cEa}F(\e),\quad
F(\e)
=\frac{1}{2}\tnorm{(H(\e)-\Hh,\e-\eh)}^2
=\frac{1}{2}\normmuom{H(\e)-\Hh}^2
+\frac{\ka}{2}\normepsoms{\e-\eh}^2
\end{align}
subject to 
$$H(\e)\in\r\cap\big(\mumo\rot\rc\big),\quad
\epsmo\rot H(\e)=\pi\zet\e+\J=\zet\e+\J$$
by Lemma \ref{withoutpi}, where the right hand side,
the `desired' magnetic field and current density satisfy
$$\J\in R(\pi)=\epsmo\rot\r,\quad
\Hh\in R(\pic)=\mumo\rot\rc,\quad
\eh\in R(\pioms)=\cEa,$$
respectively. Moreover, $H=H(\e)$ solves the system
\begin{align*}
\rot H&=\eps(\zet\e+\J)&\text{in }\om,\\
\div\mu H&=0&\text{in }\om,\\
n\cdot\mu H&=0&\text{on }\ga,\\
\mu H&\,\,\bot\harmnmu,
\end{align*}
in a standard weak sense.

From now on, we assume generally that $\om$ is bounded and \emph{convex}.
Later, $\om$ will be a cube.
Since $\om$ is convex, it has a connected boundary and hence there are no Dirichlet fields, i.e., $\harmdeps=\{0\}$,
which is important for our variational formulations, as we will see later.
Note that also the Neumann fields vanish, i.e., $\harmnmu=\{0\}$,
because a convex domain is simply connected. 
We also recall Theorem \ref{optconttheo}, Remark \ref{optconttheorem} and  \eqref{piomschoice},
which we summarize in the following strong PDE-formulation:

\begin{theo}
\label{equivareqpde}
For $\eb\in\Ltepsoms$ the following statements are equivalent:
\begin{itemize}
\item[\bf(i)] $\eb\in\cE$ is the unique optimal control 
of the optimal control problem \eqref{optcontprob2}.
\item[\bf(ii)] $\eb$ is the unique solution of the optimality system 
$$\eb=\eh-\kamo\pioms\zets\Eb,\quad
\rot\Eb=\mu(\Hb-\Hh),\quad
\rot\Hb=\eps(\zet\eb+\J)$$
with unique $\Eb\in\rc\cap\epsmo\rot\r$ and $\Hb\in\r\cap\mumo\rot\rc$.
\item[\bf(iii)] $\eb=\eh-\kamo\pioms\zets\Eb$ 
and $\Eb$ is the unique solution of $\Eb\in\rc\cap\epsmo\rot\r$ satisfying
$$\forall\,\Phi\in\rc\qquad
\scpmumoom{\rot\Eb}{\rot\Phi}
+\kamo\scpepsoms{\pioms\zets\Eb}{\pioms\zets\Phi}
=\scpepsom{\zet\eh+\J}{\Phi}
-\scpom{\Hh}{\rot\Phi}.$$
\end{itemize}
\end{theo}

We note that by Remark \ref{varformcA} the variational formulation
\begin{align*}
\forall\,\Phi&\in\rc\cap\epsmo\rot\r&
\scpmumoom{\rot E}{\rot\Phi}
+\kamo\scpepsoms{\pioms\zets E}{\pioms\zets\Phi}
&=\scpepsom{\zet\eh+\J}{\Phi}
-\scpom{\Hh}{\rot\Phi}
\end{align*}
admits a unique solution $E\in\rc\cap\epsmo\rot\r$ 
depending continuously on the right hand side data, i.e.,
$\norm{E}_{\r}\leq c(\normom{\Hh}+\normoms{\eh}+\normom{\J})$.
The crucial point for applying the Lax-Milgram lemma 
is the Maxwell estimate \eqref{maxrotestone}, i.e.,
\begin{align}
\label{maxrotestonepde}
\forall\,E&\in\rc\cap\epsmo\rot\r&
\normepsom{E}&\leq\chmom\normmumoom{\rot E},&
\chmom&:=\cmtomepsmumo:=\ca.
\end{align}
Recently, the first author could show that, since $\om$ is convex, the upper bound
$$\chmom\leq\epso\,\muo\,\cpom$$
holds, see \cite{paulymaxconst0,paulymaxconst1,paulymaxconst2}.
Here, $\cpom$ denotes the Poincar\'e constant, i.e., the best constant in
\begin{align}
\label{poincare}
\forall\,u&\in\hob:=\ho\cap\rn^{\bot}&
\normom{u}&\leq\cpom\normom{\na u}
\end{align}
with the well known upper bound
$$\cpom\leq\frac{d_{\om}}{\pi},\qquad
d_{\om}:=\diam(\om),$$
see \cite{payneweinbergerpoincareconvex,bebendorfpoincareconvex}.
By the assumptions on $\eps$ and $\mu$ there exist $\epsu,\epso>0$
such that for all $E\in\Ltom$
$$\epsu^{-1}\normom{E}
\leq\normepsom{E}
\leq\epso\normom{E},\qquad
\epsu^{-1}\normepsom{E}
\leq\normom{\eps E}
\leq\epso\normepsom{E}.$$
We note $\normepsom{E}=\normom{\eps^{\nicefrac{1}{2}}E}$
and $\normepsom{\eps^{\nicefrac{1}{2}}E}=\normom{\eps E}$.
For the inverse $\epsmo$ we have the inverse estimates, i.e., for all $E\in\Ltom$
$$\epso^{-1}\normom{E}
\leq\normepsmoom{E}
\leq\epsu\normom{E},\quad
\epso^{-1}\normepsmoom{E}
\leq\normom{\epsmo E}
\leq\epsu\normepsmoom{E}.$$
We introduce the corresponding constants $\muu,\muo>0$ for $\mu$. 
We emphasize that the Helmholtz decompositions
\begin{align}
\label{helmhocom}
\Lteps
&=\na\hoc\oplus_{\eps}\epsmo\rot\r,&
\rc
&=\na\hoc\oplus_{\eps}(\rc\cap\epsmo\rot\r),\\
\label{helmhoom}
\Ltmu
&=\na\ho\oplus_{\mu}\mumo\rot\rc,&
\r
&=\na\ho\oplus_{\mu}(\r\cap\mumo\rot\rc)
\end{align}
hold since by the convexity of $\om$ 
$$\harmdeps=\{0\},\qquad
\harmnmu=\{0\},\qquad
\rot\r=\dz,\qquad
\rot\rc=\dcz.$$
Moreover, 
\begin{align*}
R(\pi)&=\pi\Lteps=\epsmo\rot\r,&
\pi\rc&=\rc\cap\epsmo\rot\r,\\
R(\pic)&=\pic\Ltmu=\mumo\rot\rc,&
\pic\r&=\r\cap\mumo\rot\rc
\end{align*}
and for $E\in\rc$ and $H\in\r$ we have
\begin{align}
\label{EHpirot}
\rot\pi E=\rot E,\qquad
\rot\pic H=\rot H.
\end{align}
Finally, we equip the Sobolev spaces $\hoc$ and $\hob$ with the norm $\normepsom{\na\,\cdot\,}$
as well as $\r$ and $\rc$ with the norm 
$\norm{\,\cdot\,}_{\r}:=\big(\normepsom{\,\cdot\,}^2+\normmumoom{\rot\,\cdot\,}^2\big)^{\nicefrac{1}{2}}$.

From now on, let us focus on the variational formulation of Theorem \ref{equivareqpde} (iii).

\subsection{A Saddle-Point Formulation}

For numerical purposes it is useful to split the condition
$\Eb\in\rc\cap\epsmo\rot\r$ into $\Eb\in\rc$ and $\eps\Eb\in\rot\r$.
Thanks to the vanishing Dirichlet fields we have 
$$\rot\r=\dz=(\na\hoc)^{\bot},$$
which is a nice and easy implementable condition.
Then, Theorem \ref{equivareqpde} (iii) is equivalent to: Find $\Eb\in\rc$ such that
\begin{align}
\label{vareqEneb3pde1}
\forall\,\Phi&\in\rc&
\scpmumoom{\rot\Eb}{\rot\Phi}
+\kamo\scpepsoms{\pioms\zets\Eb}{\pioms\zets\Phi}
&=\scpepsom{\zet\eh+\J}{\Phi}
-\scpom{\Hh}{\rot\Phi},\\
\label{vareqEneb3pde2}
\forall\,\varphi&\in\hoc&
\scpepsom{\Eb}{\na\varphi}
&=0.
\end{align}
Mixed formulations for this kind of systems
are well understood, see e.g. \cite[section 4.1]{giraultraviartbook}.
Let us define two continuous bilinear forms $a:\rc\times\rc\to\rn$, $b:\rc\times\hoc\to\rn$ 
and two continuous linear operators $\calA:\rc\to\rc'$, $\calB:\rc\to\hoc{}'$
as well as a continuous linear functional $f\in\rc'$ by
\begin{align*}
\forall\,\Psi,\Phi&\in\rc&
\calA\Psi(\Phi):=a(\Psi,\Phi)&:=\scpmumoom{\rot\Psi}{\rot\Phi}
+\kamo\scpepsoms{\pioms\zets\Psi}{\pioms\zets\Phi},\\
\forall\,\Psi&\in\rc,\varphi\in\hoc&
\calB\Psi(\varphi):=b(\Psi,\varphi)&:=\scpepsom{\Psi}{\na\varphi},\\
\forall\,\Phi&\in\rc&
f(\Phi)&:=\scpepsom{\zet\eh+\J}{\Phi}-\scpom{\Hh}{\rot\Phi}.
\end{align*}
Then, \eqref{vareqEneb3pde1}-\eqref{vareqEneb3pde2} read:
Find $\Eb\in\rc$, such that
\begin{align}
\label{vareqEneb5pde1}
\forall\,\Phi&\in\rc&
a(\Eb,\Phi)&=f(\Phi),\\
\label{vareqEneb5pde2}
\forall\,\varphi&\in\hoc&
b(\Eb,\varphi)&=0
\end{align}
or equivalently $\calA\Eb=f$ and $\calB\Eb=0$, i.e, $\Eb\in N(\calB)$ and $\calA\Eb=f$.
In matrix-notation this is
$$\zvec{\calA}{\calB}\Eb=\zvec{f}{0}.$$

\begin{theo}
\label{theobilinwithoutb}
The variational problem \eqref{vareqEneb5pde1}-\eqref{vareqEneb5pde2} 
is uniquely solvable. The unique solution is the adjoint state 
$\Eb\in\rc\cap\epsmo\dz$.
\end{theo}

\begin{proof}
\eqref{vareqEneb5pde2} is equivalent to $E\in\epsmo\dz=\epsmo\rot\r$.
Thus, unique solvability is clear by Theorem \ref{equivareqpde} (iii).
However, for convenience we present also another proof. For 
$$E\in N(\calB)=\rc\cap\epsmo\dz$$ 
we have by \eqref{maxrotestonepde}
\begin{align}
\label{acoerciveB}
a(E,E)\geq\normmumoom{\rot E}^2\geq(1+\chmom^2)^{-1}\norm{E}_{\r}^2,
\end{align}
i.e., $a$ is coercive over $N(\calB)$.
This shows uniqueness and that there exists a unique $E\in N(\calB)$, such that 
$$\forall\,\Phi\in N(\calB)\qquad a(E,\Phi)=f(\Phi)$$
holds. But then, this relation holds also for all $\Phi\in\rc$, i.e., 
\eqref{vareqEneb5pde1} holds, which proves existence. For this, let us decompose
$\rc\ni\Phi=\Phi_{\na}+\Phi_{0}\in\na\hoc\oplus_{\eps}N(\calB)$
by \eqref{helmhocom}. Then, by $\rot\Phi_{\na}=0$ and $\pioms\zets\Phi_{\na}=0$
since $\zets\Phi_{\na}\in\na\hooms$, see \eqref{helmoms}, 
as well as $\zet\eh+\J\in\epsmo\dz=R(\pi)$ by Lemma \ref{withoutpi},
we have
\begin{align*}
a(E,\Phi)
&=\scpmumoom{\rot E}{\rot\Phi}
+\kamo\scpepsoms{\pioms\zets E}{\pioms\zets\Phi}\\
&=\scpmumoom{\rot E}{\rot\Phi_{0}}
+\kamo\scpepsoms{\pioms\zets E}{\pioms\zets\Phi_{0}}
=a(E,\Phi_{0})=f(\Phi_{0})=f(\Phi).
\end{align*}
Theorem \ref{equivareqpde} shows $E=\Eb$.
\end{proof}

For numerical reasons we look at the following modification 
of \eqref{vareqEneb5pde1}-\eqref{vareqEneb5pde2},
defining a variational problem with a well known
saddle-point structure:
Find $(\Eb,\ub)\in\rc\times\hoc$, such that
\begin{align}
\label{vareqEneb6pde1}
\forall\,\Phi&\in\rc&
a(\Eb,\Phi)+b(\Phi,\ub)&=f(\Phi),\\
\label{vareqEneb6pde2}
\forall\,\varphi&\in\hoc&
b(\Eb,\varphi)&=0.
\end{align}
We note that $b(\Phi,\ub)=\calB\Phi(\ub)=\calB^{*}\ub(\Phi)$
with $\calB^{*}:\hoc\to\rc'$. So, \eqref{vareqEneb6pde1}-\eqref{vareqEneb6pde2}
may be written equivalently as $\calA\Eb+\calB^{*}\ub=f$
and $\calB\Eb=0$, i.e, $\Eb\in N(\calB)$ and $\calA\Eb+\calB^{*}\ub=f$.
In matrix-notation this is
$$\zmat{\calA}{\calB^{*}}{\calB}{0}\zvec{\Eb}{\ub}=\zvec{f}{0}.$$

\begin{lem}
\label{lembilinbzero}
For any solution $(E,u)\in\rc\times\hoc$ 
of \eqref{vareqEneb6pde1}-\eqref{vareqEneb6pde2}, i.e., of
\begin{align*}
\forall\,\Phi&\in\rc&
a(E,\Phi)+b(\Phi,u)&=f(\Phi),\\
\forall\,\varphi&\in\hoc&
b(E,\varphi)&=0,
\end{align*}
$u=0$ holds. 
\end{lem}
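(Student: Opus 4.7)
The plan is to test the first variational equation \eqref{vareqEneb6pde1} with the admissible choice $\Phi = \na u$, which lies in $\na\hoc \subset \rc$, and then to show that each term on the left-hand side except $b(\na u, u)$ vanishes, while the right-hand side $f(\na u)$ also vanishes. This will force $\normepsom{\na u}^2 = 0$ and hence $u = 0$ via the Poincaré inequality on $\hoc$.

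First I would handle $a(E, \na u)$. Its curl-part $\scpmumoom{\rot E}{\rot\na u}$ drops out because $\rot\na u = 0$. For the remaining term $\kamo\scpepsoms{\pioms\zets E}{\pioms\zets \na u}$, the key observation is that $\zets \na u = \na(u|_{\oms}) \in \na\hooms$, so by the Helmholtz decomposition \eqref{helmoms}, namely $\Ltepsoms = \na\hooms \oplus_\eps \cE$ with $\cE = R(\pioms)$, the projection $\pioms\zets\na u$ equals zero. Hence $a(E,\na u) = 0$.

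Next I would verify $f(\na u) = 0$. The boundary term $\scpom{\Hh}{\rot\na u}$ is trivially zero since $\rot\na u = 0$. For the source term $\scpepsom{\zet\eh + \J}{\na u}$, I would invoke the standing assumptions $\J = \pi\J \in \epsmo\rot\r$ and $\eh \in \cEa = R(\pioms)$, combined with Lemma \ref{withoutpi}, which yields $\zet\eh = \pi\zet\eh \in \epsmo\rot\r$. Since the excerpt has established that on the convex domain $\om$ we have $\rot\r = \dz$, so $\epsmo\rot\r = \epsmo\dz = (\na\hoc)^{\bot_\eps}$, the pairing $\scpepsom{\zet\eh + \J}{\na u}$ vanishes. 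Thus $f(\na u) = 0$.

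Finally, $b(\na u, u) = \scpepsom{\na u}{\na u} = \normepsom{\na u}^2$, so the tested equation gives $\normepsom{\na u}^2 = 0$. Together with the Poincaré-type inequality on $\hoc$ (equivalent to the norm $\normepsom{\na\,\cdot\,}$ introduced at the end of Section \ref{secvarfrom}), this forces $u = 0$. The only subtle step is the orthogonality $\pioms\zets\na u = 0$, which hinges on the specific choice \eqref{piomschoice} of $\pioms$; any other choice of projector (for instance option (i)) would break this argument, and this is precisely why option (iii) was singled out for numerical and physical reasons.
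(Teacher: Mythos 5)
Your proof is correct and follows essentially the same route as the paper's: test \eqref{vareqEneb6pde1} with $\Phi = \na u \in \rcz \subset \rc$, observe that $\pioms\zets\na u = 0$ via $\zets\na u = \na(u|_{\oms}) \in \na\hooms$ and the Helmholtz decomposition \eqref{helmoms}, so $a(E,\na u) = 0$, and that $f(\na u) = 0$ since $\zet\eh + \J \in \epsmo\rot\r \perp_\eps \na\hoc$, whence $b(\na u,u) = \normepsom{\na u}^2 = 0$. You merely spell out the vanishing of $f(\na u)$ more explicitly than the paper (which refers back to the argument in Theorem~\ref{theobilinwithoutb}), and you correctly flag the role of the choice \eqref{piomschoice} for $\pioms$.
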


\begin{proof}
For $\varphi\in\ho$
we have $\pioms\zets\na\varphi=0$ 
as in the proof of the latter theorem
since $\zets\varphi\in\hooms$ 
and $\zets\na\varphi=\na\zets\varphi\in\na\hooms$.
Setting $\Phi:=\na u\in\rcz$, we get $\pioms\zets\Phi=0$ 
and hence $a(E,\Phi)=f(\Phi)=0$. 
But then $0=b(\Phi,u)=\normepsom{\na u}^2$,
yielding $u=0$.
\end{proof}

Now, it is clear that $(\Eb,0)$, where $\Eb$ is the unique solution 
of \eqref{vareqEneb5pde1}-\eqref{vareqEneb5pde2},
solves \eqref{vareqEneb6pde1}-\eqref{vareqEneb6pde2}.
On the other hand, any solution $(\Eb,\ub)$ of \eqref{vareqEneb6pde1}-\eqref{vareqEneb6pde2}
must satisfy $\ub=0$ and hence $\Eb$ must solve
\eqref{vareqEneb5pde1}-\eqref{vareqEneb5pde2}. This shows:

\begin{theo}
\label{bilinlem}
The variational formulation or saddle-point problem \eqref{vareqEneb6pde1}-\eqref{vareqEneb6pde2}
admits the unique solution $(\Eb,0)$.
\end{theo}

\begin{rem}
\label{bilinlemremproof}
Alternatively, we can prove the unique solvability 
of \eqref{vareqEneb6pde1}-\eqref{vareqEneb6pde2}
by a standard saddle-point technique,
e.g. by \cite[Corollary 4.1]{giraultraviartbook}.
We have already shown that $a$ is coercive over $N(\calB)=\rc\cap\epsmo\dz$, 
see \eqref{acoerciveB}. Moreover, as $\na\hoc=\rcz\subset\rc$, we have
for $0\neq\varphi\in\hoc$ with $\Phi:=\na\varphi\in\rcz$
$$\sup_{\Phi\in\rc}
\frac{b(\Phi,\varphi)}{\norm{\Phi}_{\r}\norm{\varphi}_{\hoc}}
\geq\frac{b(\na\varphi,\varphi)}{\norm{\na\varphi}_{\r}\normepsom{\na\varphi}}
=\frac{\normepsom{\na\varphi}^2}{\normepsom{\na\varphi}^2}
=1
\quad\impl\quad
\inf_{0\neq\varphi\in\hoc}
\sup_{\Phi\in\rc}
\frac{b(\Phi,\varphi)}{\norm{\Phi}_{\r}\norm{\varphi}_{\hoc}}
\geq1.$$
By Lemma \ref{lembilinbzero} we see that $\ub=0$.
\end{rem}

\subsection{A Double-Saddle-Point Formulation}

Now, we get rid of the unpleasant projector $\pioms$,
yielding another saddle-point structure.
For this, we assume for a moment that $\oms$ is additionally connected,
i.e., a bounded Lipschitz sub-domain of $\om$.
Let us decompose some $\xi\in\Ltepsoms$ by \eqref{helmoms}, i.e.,
$$\xi=-\na v+\epsmo\xi_{0}
\in\na\hooms\oplus_{\eps}\cE,\qquad
\cE=\epsmo\dczoms.$$
To compute $\xi_{0}$, we can choose $v\in\hoboms:=\hooms\cap\rn^{\bot}$ 
as the unique solution of the variational problem
\begin{align}
\label{piomscomp}
\forall\,\phi\in\hoboms\qquad 
\ka\,d(v,\phi):=\scpepsoms{\na v}{\na\phi}=-\scpepsoms{\xi}{\na\phi}.
\end{align}
Then, $\pioms\xi=\epsmo\xi_{0}=\xi+\na v$ and 
therefore for $E,\Phi\in\rc$ with $\xi:=\zets E$
\begin{align*}
a(E,\Phi)
&=\scpmumoom{\rot E}{\rot\Phi}
+\kamo\scpepsoms{\pioms\zets E}{\pioms\zets\Phi}
=\scpmumoom{\rot E}{\rot\Phi}
+\kamo\scpepsoms{\pioms\zets E}{\zets\Phi}\\
&=\underbrace{\scpmumoom{\rot E}{\rot\Phi}
+\kamo\scpepsoms{\zets E}{\zets\Phi}}_{\ds=:\tilde{a}(E,\Phi)}
+\underbrace{\kamo\scpepsoms{\na v}{\zets\Phi}}_{\ds=:c(\Phi,v)}.
\end{align*}
Hence, the saddle-point problem \eqref{vareqEneb6pde1}-\eqref{vareqEneb6pde2} 
can be written as the following variational double-saddle-point problem: 
Find $(\Eb,\ub,\vb)\in\rc\times\hoc\times\hoboms$, such that
\begin{align}
\label{vareqEneb7pde1}
\forall\,\Phi&\in\rc&
\tilde{a}(\Eb,\Phi)+b(\Phi,\ub)+c(\Phi,\vb)&=f(\Phi),\\
\label{vareqEneb7pde2}
\forall\,\varphi&\in\hoc&
b(\Eb,\varphi)&=0,\\
\label{vareqEneb7pde3}
\forall\,\phi&\in\hoboms&
c(\Eb,\phi)+d(\vb,\phi)&=0.
\end{align}
As before, now the continuous bilinear forms $\tilde{a}:\rc\times\rc\to\rn$ 
as well as $c:\rc\times\hoboms\to\rn$ and $d:\hoboms\times\hoboms\to\rn$ 
induce bounded linear operators $\tilde{\calA}:\rc\to\rc'$ as well as 
$\calC:\rc\to\hoboms'$ and $\calD:\hoboms\to\hoboms'$ by
\begin{align*}
\forall\,\Psi,\Phi&\in\rc&
\tilde{\calA}\Psi(\Phi):=\tilde{a}(\Psi,\Phi)&:=\scpmumoom{\rot\Psi}{\rot\Phi}
+\kamo\scpepsoms{\zets\Psi}{\zets\Phi},\\
\forall\,\Psi&\in\rc,\phi\in\hoboms&
\calC\Psi(\phi):=c(\Psi,\phi)&:=\kamo\scpepsoms{\zets\Psi}{\na\phi},\\
\forall\,\psi,\psi&\in\hoboms&
\calD\psi(\phi):=d(\psi,\phi)&:=\kamo\scpepsoms{\na\psi}{\na\phi}.
\end{align*}
We note that $c(\Phi,\vb)=\calC\Phi(\vb)=\calC^{*}\vb(\Phi)$
with $\calC^{*}:\hoboms\to\rc'$. So, \eqref{vareqEneb7pde1}-\eqref{vareqEneb7pde3}
may be written equivalently as $\tilde{\calA}\Eb+\calB^{*}\ub+\calC^{*}\vb=f$,
$\calB\Eb=0$ and $\calC\Eb+\calD\vb=0$, i.e, $\Eb\in N(\calB)$ 
and $\tilde{\calA}\Eb+\calB^{*}\ub+\calC^{*}\vb=f$, $\calC\Eb+\calD\vb=0$.
In matrix-notation this is
\begin{align}
\label{matrixdsp}
\dmat{\tilde{\calA}}{\calB^{*}}{\calC^{*}}{\calB}{0}{0}{\calC}{0}{\calD}
\dvec{\Eb}{\ub}{\vb}=\dvec{f}{0}{0}.
\end{align}
Note that we have formally
$$\Eb=(\tilde{\calA}-\calC^{*}\calD^{-1}\calC)^{-1}f$$
and formally in the strong sense
\begin{align*}
\tilde{\calA}
&\cong\rot_{\om}\mumo\overset{\circ}{\rot}_{\om}+\kamo\zet\eps\zets,&
\tilde{\calA}^{*}
&=\tilde{\calA},\\
\calB
&\cong-\div_{\om}\eps,&
\calB^{*}
&\cong\eps\overset{\circ}{\na}_{\om},\\
\calC
&\cong-\kamo\overset{\circ}{\div}_{\oms}\eps\zets,&
\calC^{*}
&\cong\kamo\zeta\eps\na_{\oms},\\
\calD
&\cong-\kamo\overset{\circ}{\div}_{\oms}\eps\na_{\oms},&
\calD^{*}
&=\tilde{\calD},&
f\cong\eps(\zet\eh+\J)-\rot\Hh.
\end{align*}
Here, the $\overset{\circ}{\,\cdot\,}$ and ${\,\cdot\,}_{\om}$, ${\,\cdot\,}_{\oms}$ 
indicate the boundary conditions and the domains, where the operators act, respectively.

\begin{theo}
\label{doublesaddleexist}
The variational formulation or double-saddle-point problem \eqref{vareqEneb7pde1}-\eqref{vareqEneb7pde3}
admits the unique solution $(\Eb,0,\vb)$ with $\na\vb=(\pioms-1)\zets\Eb$.
Moreover, $\eb=\eh-\kamo\pioms\zets\Eb=\eh-\kamo(\zets\Eb+\na\vb)$
defines the optimal control. 
\end{theo}

\begin{proof}
Since $\pioms\zets\Eb=\zets\Eb+\na\vb$, if and only if $\vb\in\hoboms$ and
$$\forall\,\phi\in\hoboms\qquad
c(\Eb,\phi)+d(\vb,\phi)=0,$$
we have
\begin{align*}
\forall\,\Phi&\in\rc&
a(\Eb,\Phi)+b(\Phi,\ub)&=f(\Phi),
\intertext{if and only if $\pioms\zets\Eb=\zets\Eb+\na\vb$ and}
\forall\,\Phi&\in\rc&
\tilde{a}(\Eb,\Phi)+b(\Phi,\ub)+c(\Phi,\vb)&=f(\Phi),
\intertext{if and only if $\vb\in\hoboms$ and}
\forall\,\Phi&\in\rc&
\tilde{a}(\Eb,\Phi)+b(\Phi,\ub)+c(\Phi,\vb)&=f(\Phi),\\
\forall\,\phi&\in\hoboms&
c(\Eb,\phi)+d(\vb,\phi)&=0.
\end{align*}
Hence, the unique solvability follows immediately by Theorem \ref{bilinlem}.
\end{proof}

\begin{rem}
\label{dspremproof}
As in Remark \ref{bilinlemremproof} we give an alternative proof
using the double-saddle-point structure of the problem.
We rearrange the equations and variables in \eqref{matrixdsp} equivalently as
$$\dmat{\tilde{\calA}}{\calC^{*}}{\calB^{*}}{\calC}{\calD}{0}{\calB}{0}{0}
\dvec{\Eb}{\vb}{\ub}=\dvec{f}{0}{0}$$
and obtain
$$\zmat{\hat{\calA}}{\hat{\calB}^{*}}{\hat{\calB}}{0}
\zvec{(\Eb,\vb)}{\ub}=\zvec{\hat{f}}{0},\quad
\hat{\calA}:=\zmat{\tilde{\calA}}{\calC^{*}}{\calC}{\calD},\quad
\hat{\calB}:=[\calB\;0],\quad
\hat{\calB}^{*}=\zvec{\calB^{*}}{0},\quad
\hat{f}=\zvec{f}{0}.$$
Now, $\hat{\calA}:\rc\times\hoboms\to\big(\rc\times\hoboms\big)'$,
$\hat{\calB}:\rc\times\hoboms\to\hoc{}'$,
$\hat{\calB}^{*}:\hoc\to\big(\rc\times\hoboms\big)'$
and $\hat{f}\in\big(\rc\times\hoboms\big)'$.
For bilinear forms this means:
Find $\big((\Eb,\vb),\ub\big)\in\big(\rc\times\hoboms\big)\times\hoc$, such that
\begin{align}
\label{vareqEneb8pde1}
\forall\,(\Phi,\phi)&\in\rc\times\hoboms&
\hat{a}\big((\Eb,\vb),(\Phi,\phi)\big)
+\hat{b}\big((\Phi,\phi),\ub\big)
&=\hat{f}\big((\Phi,\phi)\big),\\
\label{vareqEneb8pde2}
\forall\,\varphi&\in\hoc&
\hat{b}\big((\Eb,\vb),\varphi\big)
&=0,
\end{align}
where for $(\Psi,\psi),(\Phi,\phi)\in\rc\times\hoboms$ and $\varphi\in\hoc$
\begin{align*}
\hat{\calA}(\Psi,\psi)\big((\Phi,\phi)\big)
=\hat{a}\big((\Psi,\psi),(\Phi,\phi)\big)
&:=\tilde{a}(\Psi,\Phi)
+c(\Phi,\psi)
+c(\Psi,\phi)
+d(\psi,\phi),\\
\hat{\calB}^{*}\varphi(\Psi,\psi)
=\hat{\calB}(\Psi,\psi)(\varphi)
=\hat{b}\big((\Psi,\psi),\varphi\big)
&:=b(\Psi,\varphi),\\
\hat{f}\big((\Phi,\phi)\big)
&:=f(\Phi).
\end{align*}
Now, we can prove the unique solvability 
of \eqref{vareqEneb8pde1}-\eqref{vareqEneb8pde2}
by the same standard saddle-point technique from \cite[Corollary 4.1]{giraultraviartbook}.
As $a$ is coercive over $N(\calB)=\rc\cap\epsmo\dz$, see \eqref{acoerciveB}, 
so is $\hat{a}$ over the kernel 
$N(\hat{\calB})=N(\calB)\times\hoboms=(\rc\cap\epsmo\dz)\times\hoboms$.
More precisely, for all $(E,v)\in N(\hat{\calB})$ and $\delta\in(0,1)$
\begin{align*}
\hat{a}\big((E,v),(E,v)\big)
&=\tilde{a}\big((E,v),(E,v)\big)
+2c(E,v)
+d(v,v)\\
&=\normmumoom{\rot E}^2
+\kamo\normepsoms{\zets E}^2
+2\kamo\scpepsoms{\zets E}{\na v}
+\kamo\normepsoms{\na v}^2\\
&=\normmumoom{\rot E}^2
+\kamo\normepsoms{\zets E+\na v}^2\\
&\geq(1+\chmom^2)^{-1}\norm{E}_{\r}^2
+\delta\kamo\normepsoms{\zets E+\na v}^2\\
&\geq\frac{1}{1+\chmom^2}\normmumoom{\rot E}^2
+\frac{1}{1+\chmom^2}\normepsom{E}^2
-\frac{\delta}{\ka}\normepsoms{\zets E}^2
+\frac{\delta}{2\ka}\normepsoms{\na v}^2\\
&\geq\frac{1}{1+\chmom^2}\normmumoom{\rot E}^2
+\big(\frac{1}{1+\chmom^2}-\frac{\delta}{\ka}\big)\normepsom{E}^2
+\frac{\delta}{2\ka}\normepsoms{\na v}^2.
\end{align*}
Hence, $\alpha\,\hat{a}\big((E,v),(E,v)\big)\geq\norm{E}_{\r}^2+\norm{v}_{\hoboms}^2=\norm{(E,v)}_{\r\times\hoboms}^2$
for $\delta$ sufficiently small with some $\alpha>0$. Then, as before,
for $0\neq\varphi\in\hoc$ with $\Phi:=\na\varphi\in\rcz$ and now also $\phi:=0$
\begin{align*}
\sup_{(\Phi,\phi)\in\rc\times\hoboms}
\frac{\hat{b}\big((\Phi,\phi),\varphi\big)}{\norm{(\Phi,\phi)}_{\r\times\hoboms}\norm{\varphi}_{\hoc}}
&=\sup_{(\Phi,\phi)\in\rc\times\hoboms}
\frac{b(\Phi,\varphi)}{\norm{(\Phi,\phi)}_{\r\times\hoboms}\norm{\varphi}_{\hoc}}\\
&\geq\frac{b(\na\varphi,\varphi)}{\norm{\na\varphi}_{\r}\normepsom{\na\varphi}}
=\frac{\normepsom{\na\varphi}^2}{\normepsom{\na\varphi}^2}
=1
\end{align*}
and thus
$$\inf_{0\neq\varphi\in\hoc}
\sup_{(\Phi,\phi)\in\rc\times\hoboms}
\frac{\hat{b}\big((\Phi,\phi),\varphi\big)}{\norm{(\Phi,\phi)}_{\r\times\hoboms}\norm{\varphi}_{\hoc}}
\geq1.$$
Therefore, \eqref{vareqEneb8pde1}-\eqref{vareqEneb8pde2} is uniquely solvable.
This is equivalent to \eqref{vareqEneb7pde1}-\eqref{vareqEneb7pde3}.
Moreover by \eqref{vareqEneb7pde3} we see $\na\vb=(\pioms-1)\zets\Eb$.
Hence, $(\Eb,\ub)$ is the unique solution of \eqref{vareqEneb6pde1}-\eqref{vareqEneb6pde2}
and Lemma \ref{lembilinbzero} shows $\ub=0$.
\end{rem}

\begin{rem}
\label{remomsconn}
We emphasize that \eqref{vareqEneb7pde3} holds for all $\phi\in\hooms$ as well,
since only $\na\phi$ and $\na\vb$ occur. Hence, we can also search for
$\vb\in\hooms$, where in this case $\vb$ is uniquely determined up to constants.
This shows also, that we can skip again the additional assumption of a connected $\oms$.
Then, $\vb$ may be uniquely defined just up to constants in the connected subdomains of $\oms$,
but this does not change the uniqueness of the orthogonal Helmholtz projector
$\pioms\zets\Eb=\zets\Eb+\na\vb$.
\end{rem}

Finally, we write down the double-saddle-point problem 
\eqref{vareqEneb7pde1}-\eqref{vareqEneb7pde3} in a more explicit form:
Find $(\Eb,\ub,\vb)\in\rc\times\hoc\times\hooms$, such that
\begin{align}
\label{vareqEneb9pde1}
\forall\,\Phi&\in\rc&
\scpmumoom{\rot\Eb}{\rot\Phi}
+\kamo\scpepsoms{\zets\Eb}{\zets\Phi}\quad\\
\nonumber
&&+\scpepsom{\Phi}{\na\ub}
+\kamo\scpepsoms{\zets\Phi}{\na\vb}
&=\scpepsom{\zet\eh+\J}{\Phi}
-\scpom{\Hh}{\rot\Phi},\\
\label{vareqEneb9pde2}
\forall\,\varphi&\in\hoc&
\scpepsom{\Eb}{\na\varphi}
&=0,\\
\label{vareqEneb9pde3}
\forall\,\phi&\in\hooms&
\kamo\scpepsoms{\zets\Eb}{\na\phi}
+\kamo\scpepsoms{\na\vb}{\na\phi}
&=0.
\end{align}
Or altogether:
Find $(\Eb,\ub,\vb)\in\rc\times\hoc\times\hooms$, such that for all
$(\Phi,\varphi,\phi)\in\rc\times\hoc\times\hooms$
\begin{align}
\label{vareqEneb10pde}
&\scpmumoom{\rot\Eb}{\rot\Phi}
+\kamo\scpepsoms{\zets\Eb}{\zets\Phi}
+\scpepsom{\Phi}{\na\ub}
+\kamo\scpepsoms{\zets\Phi}{\na\vb}\\
&\quad+\scpepsom{\Eb}{\na\varphi}
+\kamo\scpepsoms{\zets\Eb}{\na\phi}
+\kamo\scpepsoms{\na\vb}{\na\phi}
+\scpom{\Hh}{\rot\Phi}
-\scpepsom{\zet\eh+\J}{\Phi}
=0.\non
\end{align}
The unique optimal control is 
$$\eb=\eh-\kamo\pioms\zets\Eb=\eh-\kamo(\zets\Eb+\na\vb)\in\epsmo\dczoms=\cE.$$
Note that $\zet\eb\in\epsmo\dcz$ 
and that $\vb\in\hooms$ is only unique up to constants in connected
parts of $\oms$.

\section{Functional A Posteriori Error Analysis}

We will derive functional a posteriori error estimates in the spirit 
of Repin \cite{repinbookone,paulyrepinmaxst}.
Especially, we are interested in estimating the error of the optimal control $\eb-\et$.

Let $\Et\in\rc$ and $\vt\in\hooms$. Then
\begin{equation}
\label{defforj}
\Et\in\rc,\quad
\et:=\eh-\kamo(\zets\Et+\na\vt)\in\Ltepsoms,\quad
\Hti:=\mumo\rot\Et+\Hh\in\mumo\dcz
\end{equation}
may be considered as approximations of the adjoint state, the optimal control and the state
$$\Eb\in\rc\cap\epsmo\dz,\quad
\eb\in\epsmo\dczoms,\quad
\Hb\in\r\cap\mumo\dcz,$$ 
respectively. We note
\begin{align*}
\eb-\et&=\kamo(\zets\Et+\na\vt-\pioms\zets\Eb)
=\kamo\big(\zets(\Et-\Eb)+\na(\vt-\vb)\big)\in\roms,\\
\Hb-\Hti&=\mumo\rot(\Eb-\Et)\in\mumo\dcz
\intertext{and hence}
\ka\rot(\eb-\et)
&=\rot\zets(\Et-\Eb)
=\zets\rot(\Et-\Eb)
=\mu\zets(\Hti-\Hb)
\in\rot\roms.
\end{align*}
If $\eh\in\roms$, then $\eb\in\roms\cap\epsmo\dczoms$
and $\et\in\roms$.

First, we will focus on the variational formulation 
\eqref{vareqEneb5pde1}, i.e., \eqref{vareqEneb3pde1}.
We note, that 
$$\scpom{\Hh}{\rot\Phi}
=\scpom{\rot\Hh}{\Phi}$$
holds for $\Phi\in\rc$ and $\Hh\in\r$,
giving two options for putting $\Hh$ in our estimates
depending on its regularity.

\subsection{Upper Bounds}

For all $\Phi\in\rc$ and all $\Psi\in\r$ we have by \eqref{vareqEneb3pde1}
\begin{align*}
&\qquad\scpmumoom{\rot(\Eb-\Et)}{\rot\Phi}
+\kamo\scpepsoms{\pioms\zets(\Eb-\Et)}{\pioms\zets\Phi}\\
&=-\scpmumoom{\mu\Hh+\rot\Et}{\rot\Phi}
+\scpepsoms{\eh-\kamo\pioms\zets\Et}{\zets\Phi}
+\scpepsom{\J}{\Phi}\\
&=-\scpmumoom{\mu\Hti}{\rot\Phi}
+\scpepsom{\zet\eh+\J-\kamo\zet\pioms\zets\Et}{\Phi}\\
&=\scpmumoom{\mu(\Psi-\Hti)}{\rot\Phi}
+\scpepsom{\zet\eh+\J-\kamo\zet\pioms\zets\Et-\epsmo\rot\Psi}{\Phi}.
\end{align*}
Since $\J,\epsmo\rot\Psi\in\epsmo\rot\r=R(\pi)$ as well as
$\zet\pioms\zets\Et=\pi\zet\pioms\zets\Et$ and 
$\zet\eh=\zet\pioms\eh=\pi\zet\pioms\eh=\pi\zet\eh$
by Lemma \ref{withoutpi}, we see 
$$R(\pi)\ni\zet\eh+\J-\kamo\zet\pioms\zets\Et-\epsmo\rot\Psi
=\pi(\zet\eh+\J-\kamo\zet\pioms\zets\Et-\epsmo\rot\Psi).$$
Thus,
\begin{align}
\label{aposterrorI1}
\begin{split}
&\qquad\scpmumoom{\rot(\Eb-\Et)}{\rot\Phi}
+\kamo\scpepsoms{\pioms\zets(\Eb-\Et)}{\pioms\zets\Phi}\\
&=\scpmumoom{\mu(\Psi-\Hti)}{\rot\Phi}
+\scpepsom{\zet\eh+\J-\kamo\zet\pioms\zets\Et-\epsmo\rot\Psi}{\pi\Phi}.
\end{split}
\end{align}
As $\pi\Phi\in\rc\cap\epsmo\rot\r$ with $\rot\pi\Phi=\rot\Phi$
by \eqref{EHpirot} we get by \eqref{maxrotestonepde}
\begin{align}
\label{maxrotesttwopde}
\normepsom{\pi\Phi}
\leq\chmom\normmumoom{\rot\Phi}.
\end{align}
Therefore, by \eqref{aposterrorI1} 
\begin{align}
\label{aposterrorI2}
\scpmumoom{\rot(\Eb-\Et)}{\rot\Phi}
+\kamo\scpepsoms{\pioms\zets(\Eb-\Et)}{\pioms\zets\Phi}
&\leq\cM_{+,\rot,\pioms}(\Et,\Hti;\Psi)
\normmumoom{\rot\Phi},
\end{align}
where
\begin{align*}
\cM_{+,\rot,\pioms}(\Et,\Hti;\Psi)
&:=\normmuom{\Hti-\Psi}
+\chmom\normepsom{\zet\eh+\J-\kamo\zet\pioms\zets\Et-\epsmo\rot\Psi}.
\intertext{Note that $\cM_{+,\rot,\pioms}$ can be replaced by}
\cMt_{+,\rot,\pioms}(\Et;\Psi)
&:=\normmumoom{\rot\Et-\mu\Psi}
+\chmom\normepsom{\zet\eh+\J-\kamo\zet\pioms\zets\Et-\epsmo\rot(\Psi+\Hh)},
\end{align*}
if $\Hh\in\r$, since $\epsmo\rot\Hh\in R(\pi)$.
Inserting $\Phi:=\Eb-\Et\in\rc$ into \eqref{aposterrorI2} yields for all $\Psi\in\r$
\begin{align}
\label{aposterrorI3}
\dnormrot{\Eb-\Et}
\leq\cM_{+,\rot,\pioms}(\Et,\Hti;\Psi),
\end{align}
where we define $\dnormrot{\,\cdot\,}$ by
\begin{align*}
\dnormrot{\Phi}^2
&:=\normmumoom{\rot\Phi}^2
+\frac{1}{\ka}\normepsoms{\pioms\zets\Phi}^2,\qquad
\Phi\in\r.
\end{align*}

To estimate the possibly non-solenoidal part of the error we decompose
$\Et$ by the Helmholtz decomposition \eqref{helmhocom}
$$\Et=\na\tilde{\varphi}+\pi\Et\in\na\hoc\oplus_{\eps}(\rc\cap\epsmo\rot\r),\qquad
\rot\pi\Et=\rot\Et.$$
Then, for all $\Phi\in\epsmo\d$
$$\normepsom{\na\tilde{\varphi}}^2
=\scpepsom{\Et}{\na\tilde{\varphi}}
=\scpepsom{\Et-\Phi}{\na\tilde{\varphi}}
-\scpom{\div\eps\Phi}{\tilde{\varphi}}
\leq\cM_{+,\div}(\Et;\Phi)
\normepsom{\na\tilde{\varphi}}$$
and hence 
$$\normepsom{\na\tilde{\varphi}}\leq\cM_{+,\div}(\Et;\Phi),\quad
\cM_{+,\div}(\Et;\Phi)
:=\normepsom{\Et-\Phi}
+\chpom\normom{\div\eps\Phi}.$$
Here, $\chpom:=\cpcomeps$ is the Poincar\'e constant in the Poincar\'e inequality
\begin{align}
\label{poincarehoc}
\forall\,\varphi\in\hoc\qquad
\normom{\varphi}\leq\chpom\normepsom{\na\varphi}
\end{align}
and we emphasize 
$$\chpom\leq\epsu\cpcom,\qquad
\cpcom<\cpom\leq\frac{d_{\om}}{\pi}.$$
As $\Eb$ already belongs to $\rc\cap\epsmo\rot\r$ we have
$\Eb-\Et=\pi(\Eb-\Et)-\na\tilde{\varphi}$
and obtain by orthogonality and by \eqref{EHpirot}, \eqref{maxrotesttwopde} 
for all $\Psi\in\r$ and all $\Phi\in\epsmo\d$
\begin{align*}
\normepsom{\Eb-\Et}^2
&=\normepsom{\na\tilde{\varphi}}^2
+\normepsom{\pi(\Eb-\Et)}^2
\leq\cM_{+,\div}^2(\Et;\Phi)
+\chmom^2\normmumoom{\rot(\Eb-\Et)}^2,\\
\dnorm{\Eb-\Et}^2
&\leq\cM_{+,\div}^2(\Et;\Phi)
+\chmom^2\dnormrot{\Eb-\Et}^2,
\end{align*}
where $\dnorm{\,\cdot\,}$ is defined by
$$\dnorm{\Phi}^2
:=\normepsom{\Phi}^2
+\frac{\chmom^2}{\ka}\normepsoms{\pioms\zets\Phi}^2,\qquad
\Phi\in\Lteps.$$
Let us underline the norm equivalence for $\Phi\in\r$
\begin{align*}
\norm{\Phi}_{\r}^2
&\leq\dnorm{\Phi}_{\r}^2
=\normepsom{\Phi}^2
+\normmumoom{\rot\Phi}^2
+\frac{1+\chmom^2}{\ka}\normepsoms{\pioms\zets\Phi}^2\\
&\leq\big(1+\frac{1+\chmom^2}{\ka}\big)\normepsom{\Phi}^2
+\normmumoom{\rot\Phi}^2
\leq\big(1+\frac{1+\chmom^2}{\ka}\big)\norm{\Phi}_{\r}^2,
\end{align*}
where $\dnorm{\,\cdot\,}_{\r}$ is defined by
$$\dnorm{\Phi}_{\r}^2
:=\dnorm{\Phi}^2
+\dnormrot{\Phi}^2,\qquad
\Phi\in\r,$$
i.e.,
$\ds\dnorm{\Phi}_{\r}^2
=\normepsom{\Phi}^2
+\normmumoom{\rot\Phi}^2
+\frac{1+\chmom^2}{\ka}\normepsoms{\pioms\zets\Phi}^2$.

\begin{lem}
\label{apostlemupperboundI}
Let $\Et\in\rc$. Then, for all $\Phi\in\epsmo\d$ and all $\Psi\in\r$
\begin{align*}
\dnorm{\Eb-\Et}^2
&\leq\chmom^2\dnormrot{\Eb-\Et}^2
+\cM_{+,\div}^2(\Et;\Phi),\\
\dnorm{\Eb-\Et}_{\r}^2
&\leq(1+\chmom^2)\dnormrot{\Eb-\Et}^2
+\cM_{+,\div}^2(\Et;\Phi),\\
\dnormrot{\Eb-\Et}
&\leq\cM_{+,\rot,\pioms}(\Et,\Hti;\Psi),
\end{align*}
where
\begin{align*}
\cM_{+,\rot,\pioms}(\Et,\Hti;\Psi)
&=\normmuom{\Hti-\Psi}
+\chmom\normepsom{\zet\eh+\J-\kamo\zet\pioms\zets\Et-\epsmo\rot\Psi},\\
\cM_{+,\div}(\Et;\Phi)
&=\normepsom{\Et-\Phi}
+\chpom\normom{\div\eps\Phi}
\end{align*}
and $\cM_{+,\rot,\pioms}$ can be replaced by $\cMt_{+,\rot,\pioms}$,
if $\Hh\in\r$.
\end{lem}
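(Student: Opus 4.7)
The plan is to follow closely the computations already carried out in the paragraphs preceding the lemma. The main tool is the exact variational characterization of $\Eb$ from Theorem \ref{equivareqpde} (iii), together with the Maxwell estimate \eqref{maxrotestonepde} and the Helmholtz decomposition \eqref{helmhocom}. Throughout, the key identities are $\zet\eb = \pi\zet\eb$ and $\zet\eh = \pi\zet\eh$ from Lemma \ref{withoutpi}, which allow us to introduce the projector $\pi$ freely and absorb it into the rotation term.

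\textbf{Step 1 (estimate for $\dnormrot{\Eb-\Et}$).} For $\Phi\in\rc$ and $\Psi\in\r$, I would start from \eqref{vareqEneb3pde1}, subtract the terms involving $\Et$ and $\Hti = \mumo\rot\Et + \Hh$, and insert the ``free'' field $\Psi$ via $\scpmumoom{\mu(\Psi-\Hti)}{\rot\Phi}$ and $\scpepsom{\epsmo\rot\Psi}{\pi\Phi}$. Using $\zet\eh+\J-\kamo\zet\pioms\zets\Et-\epsmo\rot\Psi\in R(\pi)$ (which follows since each summand is in $R(\pi)$ thanks to Lemma \ref{withoutpi}), I replace $\Phi$ by $\pi\Phi$ in the second inner product and apply Cauchy--Schwarz together with \eqref{maxrotesttwopde}, i.e., $\normepsom{\pi\Phi}\leq\chmom\normmumoom{\rot\Phi}$. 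This yields \eqref{aposterrorI2}. Setting $\Phi := \Eb-\Et\in\rc$ produces the bound $\dnormrot{\Eb-\Et}\leq\cM_{+,\rot,\pioms}(\Et,\Hti;\Psi)$. The replacement by $\cMt_{+,\rot,\pioms}$ when $\Hh\in\r$ is immediate, since then $\epsmo\rot\Hh\in R(\pi)$ and can be absorbed into the second term.

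\textbf{Step 2 (estimate for the non-solenoidal error).} I decompose $\Et = \na\tilde\varphi + \pi\Et$ via \eqref{helmhocom}, noting $\rot\pi\Et = \rot\Et$ by \eqref{EHpirot}. For any $\Phi\in\epsmo\d$, testing the identity $\normepsom{\na\tilde\varphi}^2 = \scpepsom{\Et}{\na\tilde\varphi}$ with an integration by parts against $\Phi$ (using $\na\tilde\varphi\in\na\hoc$ and the Poincar\'e inequality \eqref{poincarehoc}) gives $\normepsom{\na\tilde\varphi}\leq\cM_{+,\div}(\Et;\Phi)$.

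\textbf{Step 3 (combine).} Since $\Eb\in\rc\cap\epsmo\rot\r$, we have $\pi\Eb = \Eb$, and therefore $\Eb - \Et = \pi(\Eb-\Et) - \na\tilde\varphi$ is the orthogonal decomposition of $\Eb-\Et$ in $\Lteps$. By Pythagoras and \eqref{maxrotesttwopde} applied to $\pi(\Eb-\Et)$ (whose rotation equals $\rot(\Eb-\Et)$),
\begin{align*}
\normepsom{\Eb-\Et}^2
&= \normepsom{\na\tilde\varphi}^2 + \normepsom{\pi(\Eb-\Et)}^2
\leq \cM_{+,\div}^2(\Et;\Phi) + \chmom^2\normmumoom{\rot(\Eb-\Et)}^2.
\end{align*}
Adding the $\pioms\zets$--term on both sides gives the first inequality, and then adding $\normmumoom{\rot(\Eb-\Et)}^2$ yields the second.

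I expect no serious obstacle: the main point is the careful use of Lemma \ref{withoutpi} to justify inserting the projector $\pi$ in front of $\Phi$, which in turn enables the application of \eqref{maxrotesttwopde}. Everything else is Cauchy--Schwarz, orthogonality, and the Helmholtz decomposition.
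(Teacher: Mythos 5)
Your proposal reproduces the paper's own argument faithfully: Step~1 is the paper's derivation of \eqref{aposterrorI2}--\eqref{aposterrorI3} via \eqref{vareqEneb3pde1}, Lemma~\ref{withoutpi}, and \eqref{maxrotesttwopde}; Step~2 is the Helmholtz split $\Et=\na\tilde\varphi+\pi\Et$ with the Poincar\'e bound; Step~3 is the Pythagoras combination. One small slip: to pass from the first inequality to the second you must add $\dnormrot{\Eb-\Et}^2$ (which includes the $\kamo\normepsoms{\pioms\zets(\Eb-\Et)}^2$ term), not merely $\normmumoom{\rot(\Eb-\Et)}^2$, since $\dnorm{\Eb-\Et}_{\r}^2=\dnorm{\Eb-\Et}^2+\dnormrot{\Eb-\Et}^2$ and otherwise the left-hand side comes up short by exactly that projector term; this is a cosmetic fix that does not affect the validity of the argument.
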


\begin{rem}
\label{apostremupperboundI}
We note that by the convexity of $\om$
all appearing constants have easily computable upper bounds, i.e.,
$$\chpom\leq\epsu\cpcom,\qquad
\chmom\leq\epso\,\muo\,\cpom,\qquad
\cpcom<\cpom\leq\frac{d_{\om}}{\pi}.$$
\end{rem}

Setting $\Phi:=\Eb\in\epsmo\dz$ we get
$$\cM_{+,\div}(\Et;\Eb)
=\normepsom{\Eb-\Et}.$$
For $\Psi:=\Hb\in\r$ we see 
$\mu\Hb=\rot\Eb+\mu\Hh$ and
$\epsmo\rot\Hb=\zet\eh+\J-\kamo\zet\pioms\zets\Eb$
and thus
$$\cM_{+,\rot,\pioms}(\Et,\Hti;\Hb)
=\normmuom{\Hb-\Hti}
+\frac{\chmom}{\ka}\normepsoms{\pioms\zets(\Eb-\Et)}
\leq c_{\ka}\dnormrot{\Eb-\Et}$$
by $\mu(\Hb-\Hti)=\rot(\Eb-\Et)$ and with
$$c_{\ka}:=\big(1+\frac{\chmom^2}{\ka}\big)^{\nicefrac{1}{2}}.$$
For $\Hh\in\r$ and defining $\Psi:=\Hb-\Hh\in\r$ we see 
$$\cMt_{+,\rot,\pioms}(\Et,\Hb-\Hh)
=\cM_{+,\rot,\pioms}(\Et,\Hti;\Hb).$$

\begin{rem}
\label{apostremupperboundIsharp}
In Lemma \ref{apostlemupperboundI}, the upper bounds are equivalent 
to the respective norms of the error. More precisely, it holds
\begin{align*}
\dnormrot{\Eb-\Et}
&\leq\inf_{\Psi\in\r}\cM_{+,\rot,\pioms}(\Et,\Hti;\Psi)
\leq\cM_{+,\rot,\pioms}(\Et,\Hti;\Hb)
\leq c_{\ka}\dnormrot{\Eb-\Et},\\
\dnorm{\Eb-\Et}_{\r}^2
&\leq(1+\chmom^2)\inf_{\Psi\in\r}\cM_{+,\rot,\pioms}^2(\Et,\Hti;\Psi)
+\inf_{\Phi\in\epsmo\d}\cM_{+,\div}^2(\Et;\Phi)\\
&\leq(1+\chmom^2)\cM_{+,\rot,\pioms}^2(\Et,\Hti;\Hb)
+\cM_{+,\div}^2(\Et;\Eb)\\
&\leq c_{\ka}^2(1+\chmom^2)\dnormrot{\Eb-\Et}^2
+\normepsom{\Eb-\Et}^2
\leq c_{\ka}^2(1+\chmom^2)\dnorm{\Eb-\Et}_{\r}^2.
\end{align*}
If $\Hh\in\r$, the majorant $\ds\inf_{\Psi\in\r}\cM_{+,\rot,\pioms}(\Et,\Hti;\Psi)$
can be replaced by $\ds\inf_{\Psi\in\r}\cMt_{+,\rot,\pioms}(\Et;\Psi)$ and the terms
$\cM_{+,\rot,\pioms}(\Et,\Hti;\Hb)$ by $\cMt_{+,\rot,\pioms}(\Et,\Hb-\Hh)$.
\end{rem}

In Lemma \ref{apostlemupperboundI}, the upper bounds are explicitly computable
except of the unpleasant projector $\pioms$.
Moreover, so far we can estimate only the terms
$$\Eb-\Et,\qquad
\rot(\Eb-\Et),\qquad
\pioms\zets(\Eb-\Et),$$
but we are manly interested in estimating 
the error of the optimal control $\eb-\et$, where
$$\ka(\eb-\et)
=-\pioms\zets\Eb+\zets\Et+\na\vt
=\zets(\Et-\Eb)+\na(\vt-\vb).$$
We note
\begin{align}
\label{estnavbvt1}
\normepsoms{\na(\vb-\vt)}
&\leq\ka\normepsoms{\eb-\et}
+\normepsoms{\zets(\Eb-\Et)}.
\end{align}
To attack these problems, we note that the projector $\pioms$
is computed by \eqref{piomscomp} as follows:
For $\xi\in\Ltepsoms$ we solve the weighted Neumann Laplace problem
$$\forall\,\phi\in\hoboms\qquad 
\scpepsoms{\na v}{\na\phi}=-\scpepsoms{\xi}{\na\phi}$$
with $v=v_{\xi}\in\hoboms$. Then, $\pioms\xi=\xi+\na v$.
Now, for $\vt\in\hooms$ as well as for all $\phi\in\hooms$
and all $\Upsilon\in\epsmo\dcoms$ we have
$$\scpepsoms{\na(v-\vt)}{\na\phi}
=\scpepsoms{\Upsilon-\xi-\na\vt}{\na\phi_{\bot}}
+\scpoms{\div\eps\Upsilon}{\phi_{\bot}}
\leq\big(\normepsoms{\Upsilon-\xi-\na\vt}
+\chpoms\normoms{\div\eps\Upsilon}\big)
\normepsoms{\na\phi},$$
where $\phi_{\bot}\in\hoboms$ with $\na\phi=\na\phi_{\bot}$.
Here, $\chpoms:=\cpomseps$ is the Poincar\'e constant in the Poincar\'e inequality
\begin{align}
\label{poincarehocoms}
\forall\,\phi\in\hoboms\qquad
\normoms{\phi}\leq\chpom\normepsoms{\na\phi}
\end{align}
and we note
$$\chpoms\leq\epsu\cpoms,$$
where $\cpoms\leq d_{\oms}/\pi$ if $\oms$ is convex.
Hence, putting $\phi:=v-\vt$ gives
$$\normepsoms{\na(v-\vt)}
\leq\normepsoms{\xi+\na\vt-\Upsilon}
+\chpoms\normoms{\div\eps\Upsilon}.$$
Especially for $\xi:=\zets\Et$ with $\pioms\zets\Et=\zets\Et+\na v$ we obtain immediately
\begin{align*}
\ka(\et-\eb)
&=\pioms\zets(\Eb-\Et)+\na(v-\vt),\\
\ka^2\normepsoms{\eb-\et}^2
&=\normepsoms{\pioms\zets(\Eb-\Et)}^2
+\normepsoms{\na(v-\vt)}^2,\\
\normepsoms{\na(v-\vt)}
&\leq\normepsoms{\zets\Et+\na\vt-\Upsilon}
+\chpoms\normoms{\div\eps\Upsilon}
=:\cM_{+,\pioms}(\Et,\vt;\Upsilon).
\end{align*}
We remark $\pioms\zets\Eb=\zets\Eb+\na\vb$ giving
\begin{align*}
\zets(\Eb-\Et)
&=\pioms\zets(\Eb-\Et)
+\na(v-\vb),\\
\normepsoms{\zets(\Eb-\Et)}^2
&=\normepsoms{\pioms\zets(\Eb-\Et)}^2
+\normepsoms{\na(\vb-v)}^2.
\end{align*}
This shows
\begin{align*}
\normepsoms{\na(v-\vt)},
\normepsoms{\pioms\zets(\Eb-\Et)}
&\leq\ka\normepsoms{\eb-\et},\\
\normepsoms{\na(\vb-v)},
\normepsoms{\pioms\zets(\Eb-\Et)}
&\leq\normepsoms{\zets(\Eb-\Et)}
\end{align*}
and thus \eqref{estnavbvt1} follows again.
We note that as
$$\ka\rot(\eb-\et)
=\zets\rot(\Et-\Eb)
=\mu\zets(\Hti-\Hb)$$
and hence
$$\ka\normmumooms{\rot(\eb-\et)}
=\normmumooms{\zets\rot(\Eb-\Et)}
=\normmuoms{\zets(\Hb-\Hti)}$$
we can even estimate $\eb-\et$ in $\roms$.
More precisely,
\begin{align*}
\ka\normepsoms{\eb-\et}^2
+\ka^2\normmumooms{\rot(\eb-\et)}^2
&\leq\ka\normepsoms{\eb-\et}^2
+\normmuom{\Hb-\Hti}^2\\
&=\kamo\normepsoms{\pioms\zets(\Eb-\Et)}^2
+\kamo\normepsoms{\na(v-\vt)}^2
+\normmumoom{\rot(\Eb-\Et)}^2\\
&\leq\dnormrot{\Eb-\Et}^2
+\kamo\cM_{+,\pioms}^2(\Et,\vt;\Upsilon).
\end{align*}

Next, we find a computable upper bound for the term
$\normepsom{\zet\eh+\J-\kamo\zet\pioms\zets\Et-\epsmo\rot\Psi}$ 
in the majorant $\cM_{+,\rot,\pioms}(\Et,\Hti;\Psi)$,
simply by inserting $\pioms\zets\Et=\zets\Et+\na\vt+\na(v-\vt)$, yielding
\begin{align*}
\normepsom{\zet\eh+\J-\kamo\zet\pioms\zets\Et-\epsmo\rot\Psi}
&\leq\normepsom{\zet\eh+\J-\kamo\zet(\zets\Et+\na\vt)-\epsmo\rot\Psi}
+\kamo\normepsoms{\na(v-\vt)}\\
&\leq\normepsom{\zet\et+\J-\epsmo\rot\Psi}
+\kamo\cM_{+,\pioms}(\Et,\vt;\Upsilon).
\end{align*}
Putting all together shows:

\begin{lem}
\label{apostlemupperboundII}
Let $\Et\in\rc$ and $\vt\in\hooms$. Furthermore, let $\et:=\eh-\kamo(\zets\Et+\na\vt)\in\Ltepsoms$ and
$\Hti:=\mumo\rot\Et+\Hh\in\mumo\dcz.$
Then, for all $\Phi\in\epsmo\d$, for all $\Psi\in\r$
and for all $\Upsilon\in\epsmo\dcoms$
\begin{align*}
\normepsoms{\na(\vb-\vt)}
&\leq\normepsoms{\zets(\Eb-\Et)}
+\min\big\{\ka\normepsoms{\eb-\et},
\cM_{+,\pioms}(\Et,\vt;\Upsilon)\big\},\\
\ka\normmumooms{\rot(\eb-\et)}
&=\normmuoms{\zets(\Hb-\Hti)}
\leq\normmuom{\Hb-\Hti}
=\normmumoom{\rot(\Eb-\Et)},\\
\ka\normepsoms{\eb-\et}^2
+\normmuom{\Hb-\Hti}^2
&\leq\dnormrot{\Eb-\Et}^2
+\kamo\cM_{+,\pioms}^2(\Et,\vt;\Upsilon),\\
\dnorm{\Eb-\Et}^2
&\leq\chmom^2\dnormrot{\Eb-\Et}^2
+\cM_{+,\div}^2(\Et;\Phi),\\
\dnorm{\Eb-\Et}_{\r}^2
&\leq(1+\chmom^2)\dnormrot{\Eb-\Et}^2
+\cM_{+,\div}^2(\Et;\Phi),\\
\dnormrot{\Eb-\Et}
\leq\cM_{+,\rot,\pioms}(\Et,\Hti;\Psi)
&\leq\cM_{+,\rot}(\Hti,\et;\Psi)
+\kamo\chmom\cM_{+,\pioms}(\Et,\vt;\Upsilon),
\end{align*}
where
\begin{align*}
\cM_{+,\rot}(\Hti,\et;\Psi)
&:=\normmuom{\Hti-\Psi}
+\chmom\normepsom{\zet\et+\J-\epsmo\rot\Psi},\\
\cM_{+,\div}(\Et;\Phi)
&\;=\normepsom{\Et-\Phi}
+\chpom\normom{\div\eps\Phi},\\
\cM_{+,\pioms}(\Et,\vt;\Upsilon)
&\;=\normepsoms{\zets\Et+\na\vt-\Upsilon}
+\chpoms\normoms{\div\eps\Upsilon}.
\intertext{If $\Hh\in\r$, $\cM_{+,\rot}$ can be replaced by $\cMt_{+,\rot}$ with}
\cMt_{+,\rot}(\Et,\et;\Psi)
&:=\normmumoom{\rot\Et-\mu\Psi}
+\chmom\normepsom{\zet\et+\J-\epsmo\rot(\Psi+\Hh)}.
\end{align*}
\end{lem}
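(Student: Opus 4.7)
The strategy is to assemble the chain of inequalities already derived in the discussion preceding the lemma into a single block. The bounds on $\dnorm{\Eb-\Et}$ and $\dnorm{\Eb-\Et}_{\r}$ in terms of $\dnormrot{\Eb-\Et}$ and $\cM_{+,\div}$ are simply inherited from Lemma \ref{apostlemupperboundI}, so the real work lies in verifying the three new families of estimates: the potential error $\vb-\vt$, the rotation/$L^2$-error of $\eb-\et$, and the refined form of $\cM_{+,\rot,\pioms}$ in which the projector $\pioms$ has been removed.

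First I would fix the Helmholtz corrector $v\in\hoboms$ with $\pioms\zets\Et=\zets\Et+\na v$. Combined with $\pioms\zets\Eb=\zets\Eb+\na\vb$ and $\eb=\eh-\kamo\pioms\zets\Eb$, this gives $\ka(\et-\eb)=\pioms\zets(\Eb-\Et)+\na(v-\vt)$. The $\scpepsoms{\,\cdot\,}{\,\cdot\,}$-orthogonality inherent in the Helmholtz decomposition \eqref{helmoms} then yields two Pythagorean identities: $\ka^2\normepsoms{\eb-\et}^2=\normepsoms{\pioms\zets(\Eb-\Et)}^2+\normepsoms{\na(v-\vt)}^2$ and $\normepsoms{\zets(\Eb-\Et)}^2=\normepsoms{\pioms\zets(\Eb-\Et)}^2+\normepsoms{\na(v-\vb)}^2$. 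From the second, together with the triangle inequality $\normepsoms{\na(\vb-\vt)}\leq\normepsoms{\na(\vb-v)}+\normepsoms{\na(v-\vt)}$ and the two available bounds $\normepsoms{\na(v-\vt)}\leq\min\{\ka\normepsoms{\eb-\et},\cM_{+,\pioms}(\Et,\vt;\Upsilon)\}$, the first asserted inequality follows.

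Next, applying $\rot$ to $\ka(\eb-\et)=\zets(\Et-\Eb)+\na(\vt-\vb)$ kills the gradient and commutes with the restriction $\zets$, yielding $\ka\rot(\eb-\et)=\zets\rot(\Et-\Eb)=\mu\zets(\Hti-\Hb)$ by the definitions of $\Hti$ and $\Hb$. The inequality $\normmuoms{\zets(\Hb-\Hti)}\leq\normmuom{\Hb-\Hti}=\normmumoom{\rot(\Eb-\Et)}$ is then immediate. For the combined energy bound, I divide the first Pythagorean identity by $\ka$, add $\normmuom{\Hb-\Hti}^2=\normmumoom{\rot(\Eb-\Et)}^2$, and recognise $\kamo\normepsoms{\pioms\zets(\Eb-\Et)}^2+\normmumoom{\rot(\Eb-\Et)}^2=\dnormrot{\Eb-\Et}^2$; the leftover term $\kamo\normepsoms{\na(v-\vt)}^2$ is absorbed into $\kamo\cM_{+,\pioms}^2(\Et,\vt;\Upsilon)$.

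Finally, for the refined bound on $\dnormrot{\Eb-\Et}$, I substitute $\pioms\zets\Et=\zets\Et+\na\vt+\na(v-\vt)$ into the residual $\zet\eh+\J-\kamo\zet\pioms\zets\Et-\epsmo\rot\Psi$ appearing inside $\cM_{+,\rot,\pioms}$. Using $\et=\eh-\kamo(\zets\Et+\na\vt)$ this reduces to $\zet\et+\J-\epsmo\rot\Psi-\kamo\zet\na(v-\vt)$; a triangle inequality and the identity $\normepsom{\zet\na(v-\vt)}=\normepsoms{\na(v-\vt)}\leq\cM_{+,\pioms}(\Et,\vt;\Upsilon)$ then produce $\cM_{+,\rot,\pioms}\leq\cM_{+,\rot}(\Hti,\et;\Psi)+\chmom\kamo\cM_{+,\pioms}(\Et,\vt;\Upsilon)$, while the outer inequality $\dnormrot{\Eb-\Et}\leq\cM_{+,\rot,\pioms}$ is taken from Lemma \ref{apostlemupperboundI}. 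No single step is technically deep; the main obstacle is careful book-keeping so that the orthogonal splittings line up correctly and the prefactor $\chmom\kamo$ in front of $\cM_{+,\pioms}$ is tracked sharply through the substitution.
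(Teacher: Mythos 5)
Your proposal follows essentially the same route as the paper: introduce the Helmholtz corrector $v$ with $\pioms\zets\Et=\zets\Et+\na v$, exploit the $\eps$-orthogonality of the decomposition \eqref{helmoms} to obtain the two Pythagorean identities, derive the bound $\normepsoms{\na(v-\vt)}\leq\cM_{+,\pioms}(\Et,\vt;\Upsilon)$ from the weighted Neumann problem, take $\rot$ to annihilate the gradient term, and substitute $\pioms\zets\Et=\zets\Et+\na\vt+\na(v-\vt)$ into the residual inside $\cM_{+,\rot,\pioms}$ to remove the projector. The book-keeping, orthogonality arguments, and the prefactor $\kamo\chmom$ all match the paper's derivation, so the proposal is correct and not a different approach.
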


For $\Upsilon:=\pioms\zets\Eb=\zets\Eb+\na\vb\in\epsmo\dczoms$ we have
$$\cM_{+,\pioms}(\Et,\vt;\pioms\zets\Eb)
=\ka\normepsoms{\eb-\et}
\leq\normepsoms{\zets(\Eb-\Et)}
+\normepsoms{\na(\vb-\vt)}.$$
For $\Psi:=\Hb\in\r$ we have $\epsmo\rot\Hb=\zet\eb+\J$
yielding
\begin{align*}
\cM_{+,\rot}(\Hti,\et;\Hb)
&=\normmuom{\Hb-\Hti}
+\chmom\normepsoms{\eb-\et}\\
&\leq\normmumoom{\rot(\Eb-\Et)}
+\chmom\kamo\big(\normepsoms{\zets(\Eb-\Et)}
+\normepsoms{\na(\vb-\vt)}\big).
\end{align*}
Again, for $\Hh\in\r$ we get
$\cMt_{+,\rot}(\Et,\et;\Hb-\Hh)
=\cM_{+,\rot}(\Hti,\et;\Hb)$.

A main consequence from the third and the last estimates 
in the above lemma is the following a posteriori error estimate result:

\begin{theo}
\label{theoestinum}
Let $\Et\in\rc$ and $\vt\in\hooms$. Furthermore, 
let $\et:=\eh-\kamo(\zets\Et+\na\vt)\in\Ltepsoms$ and $\Hti:=\mumo\rot\Et+\Hh\in\mumo\dcz$. Then
\begin{align*}
\tnorm{(\bar{H}- \Hti,\bar{j}- \et)}
&=\big(\normmuom{\Hb-\Hti}^2
+\ka\normepsoms{\eb-\et}^2\big)^{1/2}\\
&\leq\cM_{+,\rot}(\Hti,\et;\Psi)
+(\kappa^{-1}\chmom+\kappa^{-1/2})\cM_{+,\pioms}(\Et,\vt;\Upsilon)
\end{align*}
holds for all $\Psi\in\r$ and all $\Upsilon\in\epsmo\dcoms$.
\end{theo}

\begin{rem}
\label{apostremupperboundIIsharp}
In Lemma \ref{apostlemupperboundII} and Theorem \ref{theoestinum} 
the upper bounds are equivalent to the respective norms of the error. More precisely it holds
\begin{align*}
\tnorm{(\Hb-\Hti,\eb-\et)}
&\leq\inf_{\Psi\in\r}\cM_{+,\rot}(\Hti,\et;\Psi)
+(\kamo\chmom+\ka^{-1/2})\inf_{\Upsilon\in\epsmo\dcoms}\cM_{+,\pioms}(\Et,\vt;\Upsilon)\\
&\leq\cM_{+,\rot}(\Hti,\et;\Hb)
+(\kamo\chmom+\ka^{-1/2})\cM_{+,\pioms}(\Et,\vt;\pioms\zets\Eb)\\
&\leq\normmuom{\Hb-\Hti}+(\chmom+2^{1/2}\ck\ka^{1/2})\normepsoms{\eb-\et}\\
&\leq\normmuom{\Hb-\Hti}+3\ck\ka^{1/2}\normepsoms{\eb-\et}\\
&\leq(1+9\ck^2)^{1/2}\tnorm{(\Hb-\Hti,\eb-\et)}.
\end{align*}
Moreover, there exists a constant $c>0$, which can be explicitly estimated as well, such that
\begin{align*}
&\qquad c^{-1}\big(\normmuom{\Hb-\Hti}^2+\normepsom{\Eb-\Et}^2+\normepsoms{\na(\vb-\vt)}^2\big)\\
&\leq\inf_{\Psi\in\r}\cM_{+,\rot}^2(\Hti,\et;\Psi)
+\inf_{\Phi\in\epsmo\d}\cM^2_{+,\div}(\Et;\Phi)
+\inf_{\Upsilon\in\epsmo\dcoms}\cM^2_{+,\pioms}(\Et,\vt;\Upsilon)\\
&\leq c\big(\normmuom{\Hb-\Hti}^2+\normepsom{\Eb-\Et}^2+\normepsoms{\na(\vb-\vt)}^2\big).
\end{align*}
If $\Hh\in\r$, the majorant $\ds\inf_{\Psi\in\r}\cM_{+,\rot}(\Hti,\et;\Psi)$
can be replaced by $\ds\inf_{\Psi\in\r}\cMt_{+,\rot}(\Et,\et;\Psi)$ and the term
$\cM_{+,\rot}(\Hti,\et;\Hb)$ by $\cMt_{+,\rot}(\Et,\et;\Hb-\Hh)$.
\end{rem}

By the latter lemma we have fully computable upper bounds for the terms
$$\normepsoms{\eb-\et},\qquad
\normmumooms{\rot(\eb-\et)},\qquad
\normepsoms{\pioms\zets(\Eb-\Et)}$$
and
$$\normepsom{\Eb-\Et}\leq\dnorm{\Eb-\Et},\qquad
\normmumoom{\rot(\Eb-\Et)}\leq\dnormrot{\Eb-\Et},$$
i.e., for the terms
$$\norm{\eb-\et}_{\roms},\qquad
\norm{\Eb-\Et}_{\r}\leq\dnorm{\Eb-\Et}_{\r},\qquad
\normepsoms{\pioms\zets(\Eb-\Et)}.$$

\subsection{Lower Bounds}

To get a lower bound, we use the simple relation in a Hilbert space
$$\forall\,x\qquad
\norm{x}^2
=\max_{y}\big(2\scp{x}{y}-\norm{y}^2\big)
=\max_{y}\scp{2x-y}{y}.$$
Note that the maximum is attained at $y=x$.
Looking at
$$\tnorm{(\Hb-\Hti,\eb-\et)}^2
=\normmuom{\Hb-\Hti}^2
+\ka\normepsoms{\eb-\et}^2
=\normmumoom{\rot(\Eb-\Et)}^2
+\ka\normepsoms{\eb-\et}^2$$
we obtain with $H:=\rot\Phi$ and $\e:=\zets\Phi$ for some $\Phi\in\rc$
by \eqref{vareqEneb3pde1}
\begin{align*}
&\qquad\tnorm{(\Hb-\Hti,\eb-\et)}^2\\
&=\normmumoom{\rot(\Eb-\Et)}^2
+\kamo\normepsoms{\pioms\zets\Eb-\zets\Et-\na\vt}^2\\
&=\max_{H\in\Lt}\scpmumoom{2\rot(\Eb-\Et)-H}{H}
+\kamo\max_{\e\in\Ltoms}\scpepsoms{2(\pioms\zets\Eb-\zets\Et-\na\vt)-\e}{\e}\\
&\geq\scpmumoom{2\rot\Eb-\rot(2\Et+\Phi)}{\rot\Phi}
+\kamo\scpepsoms{2(\pioms\zets\Eb-\zets\Et-\na\vt)-\zets\Phi}{\zets\Phi}\\
&=\scpepsoms{2(\eh-\kamo\na\vt)-\kamo\zets(2\Et+\Phi)}{\zets\Phi}
+2\scpepsom{\J}{\Phi}
-\scpmumoom{2\mu\Hh+\rot(2\Et+\Phi)}{\rot\Phi}\\
&=\scpepsom{2(\zet\eh+\J-\kamo\zet\na\vt)-\kamo\zet\zets(2\Et+\Phi)}{\Phi}
-\scpmumoom{2\mu\Hh+\rot(2\Et+\Phi)}{\rot\Phi}\\
&=\scpepsom{2(\zet\et+\J)-\kamo\zet\zets\Phi}{\Phi}
-\scpom{2\Hti+\mumo\rot\Phi}{\rot\Phi}\\
&=:\cM_{-}(\Hti,\et;\Phi).
\end{align*}
The maxima are attained at
$\hat{H}:=\rot(\Eb-\Et)$ and $\hat{\e}:=\pioms\zets\Eb-\zets\Et-\na\vt$.
We conclude that the lower bound is sharp.
For this, let $\check{\vb}$, $\check{\vt}\in\ho$ be $\ho$-extensions to $\om$
of $\vb$, $\vt$. Note that Calderon's extension theorem holds since 
$\oms$ is Lipschitz. With a cut-off function $\chi\in\cicom$ satisfying $\chi|_{\oms}=1$ we define
$$\Phi:=\Eb-\Et+\na(\chi(\check{\vb}-\check{\vt}))\in\rc.$$
Then, $\rot\Phi=\rot(\Eb-\Et)=\hat{H}$ and
\begin{align*}
\zets\Phi
&=\zets(\Eb-\Et)+\na\zets(\chi(\check{\vb}-\check{\vt}))
=\zets(\Eb-\Et)+\na\zets(\check{\vb}-\check{\vt})\\
&=\zets(\Eb-\Et)+\na(\vb-\vt)
=\pioms\zets\Eb-\zets\Et-\na\vt
=\hat{\e}.
\end{align*}
Alternatively, we can insert $\e:=\pioms\zets\Phi$ into the second maximum, yielding
\begin{align*}
&\qquad\tnorm{(\Hb-\Hti,\eb-\et)}^2\\
&\geq\scpmumoom{2\rot\Eb-\rot(2\Et+\Phi)}{\rot\Phi}
+\kamo\scpepsoms{2(\pioms\zets\Eb-\zets\Et-\na\vt)-\pioms\zets\Phi}{\pioms\zets\Phi}\\
&=\scpmumoom{2\rot\Eb-\rot(2\Et+\Phi)}{\rot\Phi}
+\kamo\scpepsoms{2\pioms\zets(\Eb-\Et)-\pioms\zets\Phi}{\pioms\zets\Phi}\\
&=\scpepsom{2(\zet\eh+\J)-\kamo\zet\pioms\zets(2\Et+\Phi)}{\Phi}
-\scpmumoom{2\mu\Hh+\rot(2\Et+\Phi)}{\rot\Phi}\\
&=\scpepsom{2(\zet\eh+\J)-\kamo\zet\pioms\zets(2\Et+\Phi)}{\Phi}
-\scpom{2\Hti+\mumo\rot\Phi}{\rot\Phi}\\
&=:\cM_{-,\pioms}(\Et,\Hti;\Phi).
\end{align*}
In general, this lower bound is not sharp.
It is sharp, if and only if $\zets\Et+\na\vt\in R(\pioms)$, 
if and only if $\zets\Et+\na\vt=\pioms\zets\Et$,
since then we can choose $\Phi:=\Eb-\Et$ yielding 
$\rot\Phi=\hat{H}$ and $\pioms\zets\Phi=\hat{\e}$.

\begin{lem}
\label{apostlemlowerbound}
Let $\Et\in\rc$ and $\vt\in\hooms$. Then
$$\tnorm{(\Hb-\Hti,\eb-\et)}^2
=\max_{\Phi\in\rc}\cM_{-}(\Hti,\et;\Phi)
\geq\sup_{\Phi\in\rc}\cM_{-,\pioms}(\Et,\Hti;\Phi).$$
\end{lem}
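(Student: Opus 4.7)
The plan is to rewrite each squared norm in $\tnorm{(\Hb-\Hti,\eb-\et)}^2$ via the Hilbert space duality identity $\norm{x}^2 = \max_y(2\scp{x}{y} - \norm{y}^2)$, then pick suitable test elements parametrized by $\Phi \in \rc$ and use the variational characterization of $\Eb$ from Theorem \ref{equivareqpde}(iii) (i.e.\ \eqref{vareqEneb3pde1}) to eliminate the unknown quantities $\Hb - \Hti$ and $\eb - \et$.

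More concretely, first I would recall $\Hb - \Hti = \mumo\rot(\Eb - \Et)$ and $\ka(\eb - \et) = \pioms\zets\Eb - \zets\Et - \na\vt$, so that
\begin{equation*}
\tnorm{(\Hb-\Hti,\eb-\et)}^2
= \normmumoom{\rot(\Eb-\Et)}^2 + \kamo\normepsoms{\pioms\zets\Eb - \zets\Et - \na\vt}^2.
\end{equation*}
Applying the duality identity in $\Ltmu$ and $\Ltepsoms$ and restricting the suprema to the test sets $\{H = \rot\Phi : \Phi \in \rc\}$ and $\{\e = \zets\Phi : \Phi \in \rc\}$ already gives a lower bound. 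Expanding the inner products, substituting $\eh - \kamo \na\vt = \et + \kamo \zets\Et$, $\Hh = \Hti - \mumo\rot\Et$, and using $\scpepsom{\pi\zet\eb + \J}{\Phi} = \scpmuom{\Hb}{\rot\Phi}$ from \eqref{vareqEneb3pde1} to remove $\Eb, \Hb, \eb$ altogether, yields exactly $\cM_{-}(\Hti,\et;\Phi)$. Similarly, restricting the $\e$-supremum to $\{\e = \pioms\zets\Phi : \Phi \in \rc\}$ and using selfadjointness of $\pioms$ gives $\cM_{-,\pioms}(\Et,\Hti;\Phi)$ (at the cost of introducing $\zet\pioms\zets$ in place of $\zet\zets$).

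The remaining, and main, step is to establish that equality holds in the first supremum, i.e.\ that the maximum is attained. The natural maximizers are $\hat H := \rot(\Eb - \Et) \in \Ltmu$ and $\hat\e := \pioms\zets\Eb - \zets\Et - \na\vt \in \Ltepsoms$, and the task is to exhibit a single $\Phi \in \rc$ realizing both simultaneously under $\Phi \mapsto (\rot\Phi, \zets\Phi)$. The construction I would use is the one already hinted at: take $\hoboms$-representatives of $\vb, \vt$, extend them to $\ho$-functions $\check\vb, \check\vt$ on $\om$ (Calderón's extension theorem is applicable since $\oms$ is Lipschitz), choose a cutoff $\chi \in \cicom$ with $\chi|_{\oms} = 1$, and set
\begin{equation*}
\Phi := \Eb - \Et + \na\bigl(\chi(\check\vb - \check\vt)\bigr) \in \rc.
\end{equation*}
A direct computation, using $\rot\na = 0$ and $\zets\chi = 1$, shows $\rot\Phi = \hat H$ and $\zets\Phi = \zets(\Eb - \Et) + \na(\vb - \vt) = \hat\e$, and this verifies that the lower bound $\cM_{-}$ is sharp. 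For the second inequality involving $\cM_{-,\pioms}$ no sharpness is claimed, and indeed the same $\Phi$ would only yield equality if $\zets\Et + \na\vt$ already lies in $R(\pioms) = \epsmo\dczoms$; this is exactly the condition noted in the lemma's discussion and requires no further argument.

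The main technical obstacle is the Calderón extension step needed to manufacture the equalizer $\Phi$: one must make sure that the gradient correction lies in $\rc$ (automatic since a gradient of an $\ho$-function is in $\rc$ only if the boundary trace vanishes, hence the cutoff $\chi$ supported in $\om$ is crucial) and simultaneously restores the correct trace of $\zets\Phi$ on $\oms$. All other manipulations are algebraic rewrites using \eqref{vareqEneb3pde1} and the definitions of $\Hti$, $\et$.
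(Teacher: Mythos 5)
Your proposal is correct and takes essentially the same route as the paper: Hilbert space duality identity for both squared norms, restriction to test pairs $(\rot\Phi,\zets\Phi)$ (resp.\ $(\rot\Phi,\pioms\zets\Phi)$), substitution via \eqref{vareqEneb3pde1} to remove the unknowns, and the Calder\'on-extension-plus-cutoff construction of the equalizer $\Phi := \Eb-\Et+\na(\chi(\check\vb-\check\vt))$ to show the first bound is attained. One small slip: the intermediate identity you quote, $\scpepsom{\pi\zet\eb+\J}{\Phi}=\scpmuom{\Hb}{\rot\Phi}$, should read $\scpom{\Hb}{\rot\Phi}$ (unweighted on the right, since $\scpepsom{\epsmo\rot\Hb}{\Phi}=\scpom{\rot\Hb}{\Phi}=\scpom{\Hb}{\rot\Phi}$); moreover this particular rewriting reintroduces $\Hb$ rather than eliminating it, so the step you actually want is to invoke \eqref{vareqEneb3pde1} directly, whose right-hand side involves only the data $\eh,\J,\Hh$ — this is exactly what the paper does and what your plan otherwise describes.
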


\subsection{Two-Sided Bounds}

Combining Theorem \ref{theoestinum} and  Lemma \ref{apostlemlowerbound}, we have

\begin{theo}
\label{aposttheotwosided}
Let $\Et\in\rc$ and $\vt\in\hooms$. Then
\begin{align*}
\sup_{\Phi\in\rc}\cM_{-,\pioms}(\Et,\Hti;\Phi)
&\leq\max_{\Phi\in\rc}\cM_{-}(\Hti,\et;\Phi)
=\tnorm{(\Hb-\Hti,\eb-\et)}^2
=\normmuom{\Hb-\Hti}^2
+\ka\normepsoms{\eb-\et}^2\\
&\leq\big(\inf_{\Psi\in\r}\cM_{+,\rot}(\Hti,\et;\Psi)
+(\kamo\chmom+\ka^{-1/2})\inf_{\Upsilon\in\epsmo\dcoms}\cM_{+,\pioms}(\Et,\vt;\Upsilon)\big)^2,
\end{align*}
where
\begin{align*}
\cM_{+,\rot}(\Hti,\et;\Psi)
&=\normmuom{\Hti-\Psi}
+\chmom\normepsom{\zet\et+\J-\epsmo\rot\Psi},\\
\cM_{+,\pioms}(\Et,\vt;\Upsilon)
&=\normepsoms{\zets\Et+\na\vt-\Upsilon}
+\chpoms\normoms{\div\eps\Upsilon},\\
\cM_{-}(\Hti,\et;\Phi)
&=\scpepsom{2(\zet\et+\J)-\kamo\zet\zets\Phi}{\Phi}
-\scpom{2\Hti+\mumo\rot\Phi}{\rot\Phi}.
\intertext{If $\Hh\in\r$, $\cM_{+,\rot}$ can be replaced by $\cMt_{+,\rot}$ with}
\cMt_{+,\rot}(\Et,\et;\Psi)
&=\normmumoom{\rot\Et-\mu\Psi}
+\chmom\normepsom{\zet\et+\J-\epsmo\rot(\Psi+\Hh)}.
\end{align*}
\end{theo}

\section{Adaptive Finite Element Method} 

Based on the a posteriori error estimate proven in Theorem \ref{theoestinum} of the previous section, 
we present now an adaptive finite element method (AFEM) for solving the optimal control problem. 
The method consists of a successive loop of the sequence 
\begin{equation}
\label{Cycle}
\mbox{SOLVE} \rightarrow \mbox{ESTIMATE} \rightarrow \mbox{MARK} \rightarrow \mbox{REFINE} \, .
\end{equation}
For solving the optimal control problem, we employ a mixed finite method 
based on the lowest-order edge elements of N\'{e}d\'{e}lec's first family 
and piecewise linear continuous elements.  
Furthermore, the marking of elements for refinement is carried out 
by means of the D\"orfler marking.

\subsection{Finite Element Approximation}

From now on, $\Omega$ and $\omega$ are additionally assumed to be polyhedral. 
For simplicity we set $\eps:=1$.
Let $(h_n)$ denote a monotonically decreasing sequence 
of positive real numbers and let $ \big(\mathcal{T}_h(\Omega)\big)_{h_n} $  
be a nested shape-regular family of simplicial triangulations of $ \Omega$. 
The nested family is constructed in such a way that $ \mu$ 
is elementwise polynomial on $ \mathcal{T}_{h}(\Omega)$, 
and that there exists a subset $ \mathcal T_{h}(\omega) \subset \mathcal{T}_{h}(\Omega)$ such that
\[
\overline\omega = \bigcup_{T \in \mathcal T_{h}(\omega)} T.
\]
For an element $ T \in \mathcal{T}_h(\Omega)$, 
we denote by $\delta_T$ the diameter of $T$ and set 
$ \delta := \max\big\{h_T \, : \, T \in \mathcal{T}_h(\Omega)\big\}$ for the maximal diameter. 
We  consider the lowest-order edge elements of N\'{e}d\'{e}lec's first family  
\[
 \mathcal{N}_1(T):= \big\{ \Phi : T \rightarrow \mathbb{R}^3 \, : \,  \Phi(x) =  a +  b \times x  \textnormal{ with }  a, b \in \mathbb R^3\big\} ,
\]
which give rise to the $\rot$-conforming  N\'{e}d\'{e}lec edge element space   \cite{NED80}
\[
 \overset\circ{\mathsf{R}}{}_h := \big\{ \Phi_h \in \rc(\Omega) \, : \, \Phi_h|_T \in \mathcal{N}_1(T)\quad\forall\,T  \, \in \mathcal{T}_h(\Omega) \big\} .
\]
 Furthermore, we denote the space of piecewise linear continuous elements by
\[
\overset\circ{\mathsf{H}}{}^1_h:= \big\{ \varphi_h \in \hoc(\Omega) \, : \, \varphi_h|_T(x)= a_T + b_T \cdot x  \textnormal{ with }  a_T \in \mathbb R, \, b_T \in \mathbb R^3 \quad \forall\,T  \, \in \mathcal{T}_h(\Omega) \big\} 
\]
and 
\[
\mathsf{H}^1_{\omega,h}:= \big\{ \phi_h \in \ho(\omega) \, : \, \phi_h|_T(x)= a_T + b_T \cdot x  \textnormal{ with }  a_T \in \mathbb R, \, b_T \in \mathbb R^3 \quad \forall\,T  \, \in \mathcal{T}_h(\omega) \big\} .
\]
We formulate now the mixed finite element approximation of  the necessary and sufficient optimality condition \eqref{vareqEneb7pde1}-\eqref{vareqEneb7pde3}, see also \eqref{vareqEneb9pde1}-\eqref{vareqEneb9pde3}
resp. \eqref{vareqEneb10pde}, as follows:
Find $(\bar E_h,\bar u_h,\bar v_h)\in \overset\circ{\mathsf{R}}{}_h \times \overset\circ{\mathsf{H}}{}^1_h \times \mathsf{H}^1_{\omega,h}$  such that, for all $(\Phi_h,\varphi_h,\phi_h) \in \overset\circ{\mathsf{R}}{}_h \times \overset\circ{\mathsf{H}}{}^1_h\times \mathsf{H}^1_{\omega,h}$, there holds
\begin{align}
\label{vareqEh1}
\tilde{a}(\bar E_h,\Phi_h)+b(\Phi_h,\bar u_h)+c(\Phi_h,\bar v_h)&=f(\Phi_h),\\
\label{vareqEh2}
b(\bar E_h,\varphi_h)&=0,\\
\label{vareqEh3}
c(\bar E_h,\phi_h)+d(\bar v_h,\phi_h)&=0,
\end{align}
where
\[
 \tilde{a}(\bar E_h,\Phi_h) = 
\scpmumoom{\rot \bar E_h}{\rot\Phi_h}
+\kamo\scpoms{\zets \bar E_h}{\zets\Phi_h},
\]
and 
\[
b(\Phi_h,\bar u_h) = 
\scpom{\Phi_h}{\na \bar u_h},
\quad
c(\Phi_h,\bar v_h)=  \kamo\scpoms{\zets\Phi_h}{\na \bar v_h},
\quad
d(\bar v_h,\phi_h)=\kappa^{-1} \scpoms{\na \bar v_h}{\na\phi_h}.
\]
As in the continuous case (see Remark \ref{dspremproof}), the existence of a unique solution $(\bar E_h,  \bar v_h,\bar   v_h)\in \overset\circ{\mathsf{R}}{}_h \times \overset\circ{\mathsf{H}}{}^1_h \times \mathsf{H}^1_{\omega,h}$ for the discrete system \eqref{vareqEh1}-\eqref{vareqEh3} follows  from the discrete  Ladyzhenskaya-Babu\v{s}ka-Brezzi  condition:
\begin{equation}\label{LBBh}
\inf_{0 \neq\varphi_h \in\overset\circ{\mathsf{H}}{}^1_h}
\sup_{(\Phi_h,\phi_h)\in \overset\circ{\mathsf{R}}{}_h \times \mathsf{H}^1_{\omega,h} }
\frac{b \big(\Phi_h ,\varphi_h \big)}{\norm{(\Phi_h,\phi_h)}_{\r\times\hoboms}\norm{\varphi_h}_{\hoc}}
\geq1,
\end{equation}
which is obtained, analogously to the continuous case, by setting $\Phi_h = \nabla \varphi_h$ and $\phi_h=0$. Note that the inclusion    $\nabla \overset\circ{\mathsf{H}}{}^1_h \subset \overset\circ{\mathsf{R}}{}_h$ holds such that every gradient field $\nabla \varphi_h$ of a piecewise linear continuous function $\varphi_h \in \overset\circ{\mathsf{H}}{}^1_h$ is an element of $\overset\circ{\mathsf{R}}{}_h$.   Let us also remark that
on the discrete solenoidal subspace of $\overset\circ{\mathsf{R}}{}_h$ 
the following discrete Maxwell estimate holds:
$$\exists\,c>0
\quad 
\forall\, \Phi_h \in 
\big\{\Psi_h \in \overset\circ{\mathsf{R}}{}_h \,Ê: \,
\scpom{\Psi_h}{\na   \psi_h} = 0 \quad
\forall\, \psi_h \in \overset\circ{\mathsf{H}}{}^1_h\big\}
\qquad
\normom{\Phi_h}  \leq  c \, \normom{ \rot \Phi_h}.$$
Note that $c$ is independent of $h$, see e.g. \cite{hip02}.
Having solved the discrete system \eqref{vareqEh1}-\eqref{vareqEh3}, 
we obtain the finite element approximations for the optimal control 
and the optimal magnetic field as follows 
\begin{equation}
\bar{j}_h :=  j_{d,h}-\kamo(  \bar E_h \vert_\omega +\na \bar   v_h) \qquad
\bar{H}_h := \mumo\rot \bar E_h + H_{d,h},
\end{equation}
see \eqref{defforj}, where $ j_{d,h}$ and $H_{d,h}$ are appropriate finite element approximations of  the shift control $j_d$ and the desired magnetic field $H_d$, respectively.  

\subsection{Evaluation of the Error Estimator}

By virtue of Theorem \ref{theoestinum},  
the total error in the finite element solution can be estimated by
\begin{equation}
\label{estinum}
\tnorm{(\bar{H}-\bar{H}_h,\bar{j}-\bar{j}_h)}   
\leq \cM_{+,\rot}(\bar{H}_h,\bar{j}_h;\Psi)
+(\kappa^{-1}\chmom+\kappa^{-1/2})\cM_{+,\pioms}(\bar E_h,\bar v_h;\Upsilon),
\end{equation}
for every $(\Psi, \Upsilon) \in  R(\Omega) \times \dcoms $,  where 
\begin{align}\label{definumM1}
\cM_{+,\rot}(\bar{H}_h,\bar{j}_h ;\Psi)
&=\normmuom{\bar{H}_h-\Psi}
+\chmom\normom{\zet\bar{j}_h+\J-\rot\Psi},\\
\label{definumM2}
\cM_{+,\pioms}(\bar E_h,\bar   v_h;\Upsilon)
&\;=\normoms{\zets \bar E_h+\na \bar   v_h-\Upsilon}
+\chpoms\normoms{\div \Upsilon}.
\end{align}
We point out that $(\Psi, \Upsilon) \in  R(\Omega) \times \dcoms$ 
should be suitably chosen in order to avoid big over estimation in \eqref{estinum}.  Our strategy is to find  appropriate  finite element functions for $\Psi$ and $\Upsilon$, which minimize  functionals related to   $\cM_{+,\rot}$ and $\cM_{+,\pioms}$.     To this aim, we make use of the $\rot$-conforming  N\'{e}d\'{e}lec edge element space without the vanishing tangential trace condition 
\[
\mathsf{R}_h := \big\{ \Psi_h \in \r(\Omega) \, : \, \Psi_h|_T \in \mathcal{N}_1(T) \quad \forall\,T  \, \in \mathcal{T}_h(\Omega) \big\} 
\]
and  the   $\div$-conforming Raviart-Thomas finite element space  on the control domain
\[
 \overset\circ{\mathsf{D}}{}_{\omega,h} := \big\{ \Upsilon_h \in  \dcoms  \, : \, \Upsilon_h|_T \in \mathcal{RT}_1(T) \quad \forall\,T  \, \in \mathcal{T}_h(\omega) \big\},
\]
where  
 \[
 \mathcal{RT}_1(T):= \{ \Upsilon : T \rightarrow \mathbb{R}^3 \, : \,  \Upsilon(x) =  a +  b x \textrm{ with } a \in \mathbb R^3, b \in \mathbb R\}.
\]
Now, we look for solutions of the finite-dimensional minimization problems
\begin{equation}
\label{optinum1}
\min_{\Psi_h \in \mathsf{R}_h} 
\Big( \normmuom{\bar{H}_h-\Psi_h}^2
+\chmom^2 \normom{\zet\bar{j}_h+\J- \rot\Psi_h}^2\Big)
\end{equation}
and
\begin{equation}
\label{optinum2}
\min_{ \Upsilon_h \in \overset\circ{\mathsf{D}}{}_{\omega,h}} 
\Big(\normoms{\zets \bar E_h+\na \bar   v_h-\Upsilon_h}^2
+\chpoms^2\normoms{\div \Upsilon_h}^2\Big).
\end{equation}
Evidently, the optimization problems \eqref{optinum1}-\eqref{optinum2}   admit unique solutions $\bar \Psi_h \in \mathsf{R}_h$ and $\bar \Upsilon_h \in \overset\circ{\mathsf{D}}{}_{\omega,h}$. Furthermore, the corresponding necessary and sufficient optimality conditions are given by the coercive variational equalities
\begin{align*}
\forall\,\Psi_h &\in \mathsf{R}_h 
&
\chmom^2\scpom{\rot  \bar \Psi_{h}}{\rot \Psi_h}
+ \scpmuom{ \bar \Psi_{h}}{\Psi_h}
&= \scpmuom{ \bar{H}_h}{\Psi_h}
+\chmom^2\scpom{\zet\bar{j}_h+\J}{\rot\Psi_h}\\
\forall\,\Upsilon_h &\in \overset\circ{\mathsf{D}}{}_{\omega,h} 
&
\chpoms^2\scpoms{\div \bar \Upsilon_h}{\div \Upsilon_h}
+\scpoms{{\bar \Upsilon_h}}{\Upsilon_h}
&=\scpoms{\zets \bar E_h +\na \bar   v_h}{\Upsilon_h}.
\end{align*}
Taking the optimal solutions of \eqref{optinum1}-\eqref{optinum2} into account, we introduce  
\begin{equation}
\label{DefMh}
\mathcal M_h :=
 \cM_{+,\rot}(\bar{H}_h,\bar{j}_h ; \bar \Psi_h) +  
 (\kappa^{-1} \chmom + \kappa^{-1/2}) \cM_{+,\pioms}(\bar E_h,\bar v_h; \bar \Upsilon_h).
\end{equation}
Then, \eqref{estinum} yields 
\begin{equation} \label{estinumtheo}
\tnorm{(\bar{H}-\bar{H}_h,\bar{j}-\bar{j}_h)} \le \mathcal M_h.
\end{equation}

\subsection{D\"orfler Marking}

In the step MARK of the sequence \eqref{Cycle}, elements of the simplicial triangulation $ \mathcal{T}_h(\Omega)$ are marked for refinement according to the information provided by    the estimator $\mathcal M_h$. 
With regard to convergence and quasi-optimality of AFEMs, the bulk criterion by D\"orfler \cite{doe96} is a reasonable choice for the marking strategy, which we pursue here.  More precisely, we select a set $\mathcal{E}$ of elements such that for some $  \theta \in (0,1) $ there holds
\begin{align}
 \label{MARK2}
  \sum\limits_{T \in \mathcal{E}}  \mathcal M_T  \ge \theta   \sum\limits_{T \in \mathcal{T}_h(\Omega)}  \mathcal M_T  ,
\end{align}
where 
\[
\begin{split}
  \mathcal M_T := & |\bar{H}_h- \bar \Psi_h|_{T,\mu}
+\chmom | \zet\bar{j}_h+\J-\epsmo\rot \bar \Psi_h|_{T}  +    \big(\kappa^{-1} \chmom + \kappa^{-1/2}\big)  \mathcal M_{\omega,T}  \\[2ex]
   \mathcal M_{\omega,T} := & \left\{ 
   \begin{array}{lll} 
  |\zets \bar E_h+\na \bar   v_h- \bar \Upsilon_h|_{T}
+\chpoms|\div  \bar \Upsilon_h|_{T}  \quad &\textrm{if } T \in \mathcal{T}_h(\omega), \\
0 & \textrm{if } T \notin \mathcal{T}_h(\omega).
\end{array}
\right.
\end{split}
\]
Elements of the triangulation $ \mathcal{T}_h(\Omega)$ that have been marked for refinement are subdivided by the newest vertex bisection.

\subsection{Analytical Solution}

To test the numerical performance of the previously introduced adaptive method, we construct an analytical solution for the optimal control problem \eqref{optcontprobintro}. 
Here, the computational domain and the control domain are specified by 
$$
\Omega :=(-0.5,1)^3 \quad \textrm{and} \quad \omega := (0,0.5)^3.
$$
Furthermore, we put $\eps:=1$, $\kappa:=1$, 
and the  magnetic permeability is set to be piecewise constant, i.e.
$$
\mu := \left\{ 
\begin{array}{lll}
10 \quad &\textrm{in } (-0.5,0) \times (-0.5,0) \times (-0.5,1),\\
1 &\textrm{elsewhere}.\\
\end{array}
\right.
$$
We introduce  the vector field
$$
E(x) :=  \frac{\mu^2(x)}{8 \pi^2}\sin^2(2\pi x_1) \sin^2(2\pi x_2)\dvec001 \quad \forall\, x \in \Omega,
$$
and set
$$
\bar E :=  \chi_{_{\Omega_s}} E \quad \textrm{and} \quad \bar{H} := \mu^{-1} \rot E,
$$
where $\chi_{_{\Omega_s}}$ stands for the characteristic function on the subset 
$\Omega_s :=  \Omega \setminus \big\{(0,0.5) \times (0,0.5) \times (-0.5,1)\big\}$. 
By construction,  it holds that $\bar E \in \rc(\Omega) \cap \dzom$ and $\bar{H} \in  \r(\Omega) \cap \mumo\dczom$.   The desired magnetic field is set to be
$$
\Hh := \chi_{_{\Omega \setminus \Omega_s}} \bar{H} \in \r(\Omega).
$$
Finally, we define the optimal control $\bar{j} \in \dcz(\omega)$ as
$$ 
\bar{j} (x) := 100\dvec{\sin(2\pi x_1) \cos(2 \pi x_2)}{-\sin(2 \pi x_2) \cos(2\pi x_1)}{0} 
\quad \forall\, x \in \omega,
$$
and the shift control $\eh$ as well as the applied electric current $J$ as
$$
\eh := \bar{j} \quad \textrm{and} \quad J :=  \left\{ 
\begin{array}{lll} \rot \bar{H} - \bar{j} \quad &\textrm{in } \omega,  \\[1ex]
\rot  \bar{H} &\textrm{elsewhere}.\\
\end{array}
\right.
$$
By construction, we have 
\begin{align*}
\hspace{3cm} \rot\bar{H}&= \zet\eb+ \J,&\rot\bar E&=\mu(\bar{H}-\Hh)&\text{in }\om,\\
\div\mu\bar{H}&=0,&\div\bar E&=0&\text{in }\om,\\
n\cdot\mu\bar{H}&=0,&n\times \bar E&=0&\text{on }\ga,
\end{align*}
and 
\[
 \dcz(\omega) \ni \eb=\eh= \eh -\frac{1}{\ka} \pi_\omega\zets\Eb, 
 \]
from which it follows that  $\bar{j}$ is the optimal control of \eqref{optcontprobintro} 
with the associated optimal magnetic field $\bar{H} $ and the adjoint field $\bar E$.

\subsection{Numerical Results}

With the constructed analytical solution  at hand, we can now demonstrate  the numerical performance of the adaptive method using the proposed error estimator $\mathcal M_h$ defined in  \eqref{DefMh}. Here, we used a moderate value $\theta =0.5$  for the bulk criterion in the D\"orfler marking.  Let us also point out that   all numerical results  were implemented by a Python script using the Dolphin Finite Element Library \cite{fenics12}. In the first experiment, we carried out a  thorough comparison between the total error 
$
\tnorm{(\bar{H}-\bar{H}_h,\bar{j}-\bar{j}_h)}
$
resulting from the adaptive mesh refinement strategy and the one based on  the uniform mesh refinement. The result is plotted in  Figure \ref{fig1}, where DoF stands for the degrees of freedom in the finite element space. Based on this result,   we   conclude a better convergence performance of the adaptive method over the standard uniform mesh refinement.  Next,  in Table \ref{tab1}, we  report on  the detailed convergence history for the total error including the  value for $\mathcal M_h$ computed in every step of the adaptive mesh refinement method. It should be underlined that the Maxwell and  Poincar\'e constants $ \chmom $ and $\chpoms$  appear in the proposed estimator  $\mathcal M_h$ (see \eqref{definumM1}-\eqref{definumM2} and \eqref{DefMh}).    We do not neglect these constants in our computation, and    there is no  further unknown or hidden constant in $\mathcal M_h$. By the choice of the magnetic permeability $\mu$ and the computational domains $\Omega,\omega$ (see  Remark \ref{apostremupperboundI}), the constants $ \chmom,\chpoms$ can be estimated as follows:
$$\chmom \leq  15 \frac{\sqrt{3} }{\pi}  \qquad \textrm{and} \quad
\chpoms \le   \frac{\sqrt{3} }{ 2\pi}
$$ 
These values were used in the computation of $\mathcal M_h$. As we can observe in Table 1, $\mathcal M_h$ severs as an upper bound for the total error. This is in accordance with our theoretical findings.  

\begin{figure}[h!]
\centering
\includegraphics[scale=0.3]{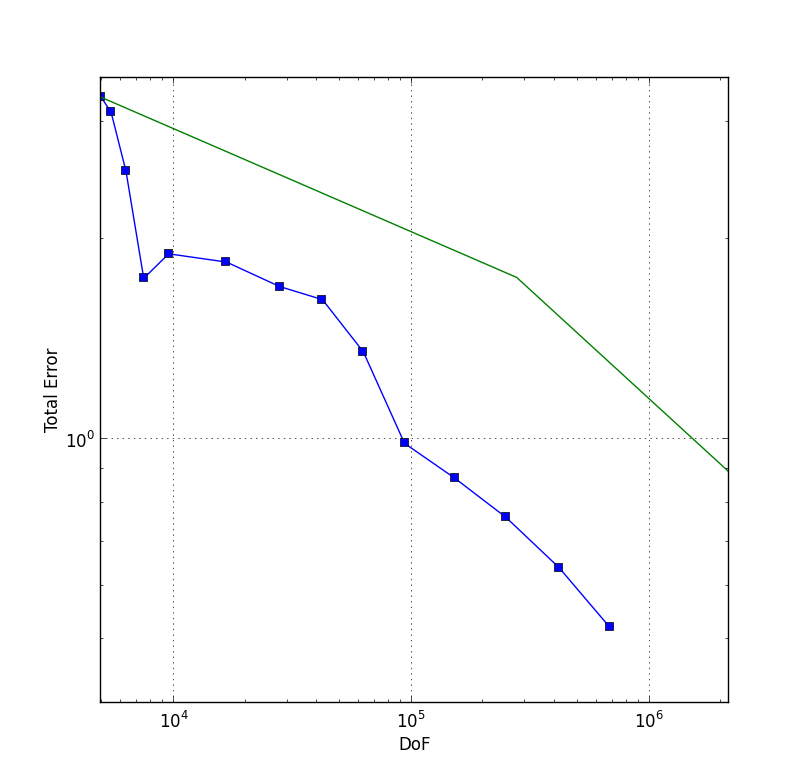}
\caption{Total error for uniform (green line) and adaptive mesh  refinement (blue line).}
\label{fig1}
\end{figure}

\begin{table}[!ht]
\begin{center}
\setlength{\abovecaptionskip}{1pt}
\setlength{\belowcaptionskip}{1pt}
\renewcommand{\arraystretch}{0.7}
\setlength{\tabcolsep}{0.14cm}
\begin{tabular*}{0.6\textheight}
{@{\extracolsep{\fill}}cccccccccc}
\hline
DoF & Error in $H$&  Error in $j$ & Total Error  & $\mathcal M_h$\\
\hline
\hline
$4940$&$0.864259760285$&$ 3.15539577688$&$3.2716154178$&$ 63.4376616999$\\
$ 5436$&$0.694612463498$&$3.02692021715$&$3.10559695959$&$58.5220353976$\\
$ 6280$&$0.560747440261$&$2.46658970377$&$2.52952613319$&$46.1596277893$\\
$ 7480$&$0.517270941002$&$1.66980235746$&$1.74808728025$&$ 29.9835458365$\\
$9506 $&$ 0.486958908788$&$1.83890409144$&$1.90228736955$&$33.7781950898$\\
$16593$&$0.409942119878$&$1.79996131396$&$1.8460534319$&$27.7781692767$\\
$27622$&$0.322357401619$&$1.66560722229$&$1.69651457799$&$22.1793926139$\\
$42000 $&$0.284583422125$&$1.59619732314$&$1.62136782334$&$20.1292192945$\\
$62424 $&$ 0.234023588085$&$1.33186688758$&$1.35227084788$&$16.7472327351$\\
$ 92730 $&$0.196145507066$&$0.963057265783$&$ 0.982828752692 $&$12.4090773249$\\
$150802$&$0.166713389106$&$ 0.857068785338$&$ 0.873132439501$&$10.621022309$\\
$ 248269 $&$ 0.143328090061$&$0.747991599295$&$ 0.761599877899$&$9.09719391479$\\
$ 414395$&$0.120042829228$&$ 0.630681094598$&$0.642003834827$&$7.62309929568$\\
$ 674856 $&$0.102521829252$&$0.510228751611$&$ 0.520426848311$&$ 6.30611525921$\\
\hline
\end{tabular*}
\end{center}
\caption{Convergence history.}
\label{tab1}
\end{table}

In Figure \ref{fig2}, we plot the  finest   mesh as the result of the adaptive method. It is noticeable that  the adaptive mesh refinement  is mainly concentrated  in the control domain. Moreover, the computed optimal control and optimal magnetic field are depicted in Figure \ref{fig3}. We see that they are already close to the optimal one.

\begin{figure}[h!]
\centering
\includegraphics[width=5cm,height=5cm]{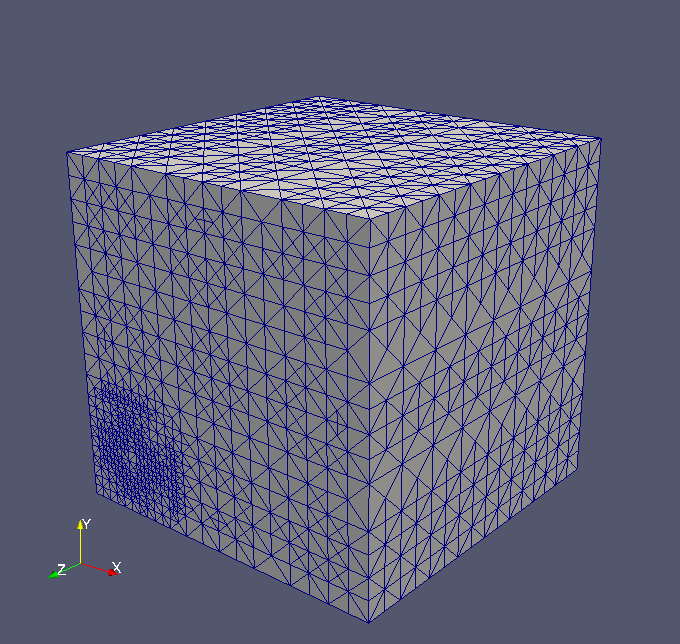}
\includegraphics[width=5cm,height=5cm]{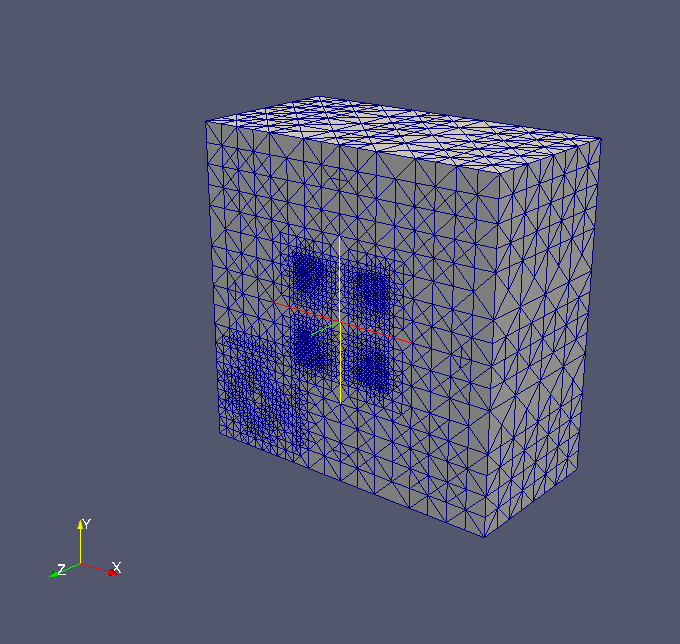}
\caption{Adaptive mesh.}
\label{fig2}
\end{figure}

\begin{figure}[h!]
\centering
\includegraphics[width=5cm,height=5cm]{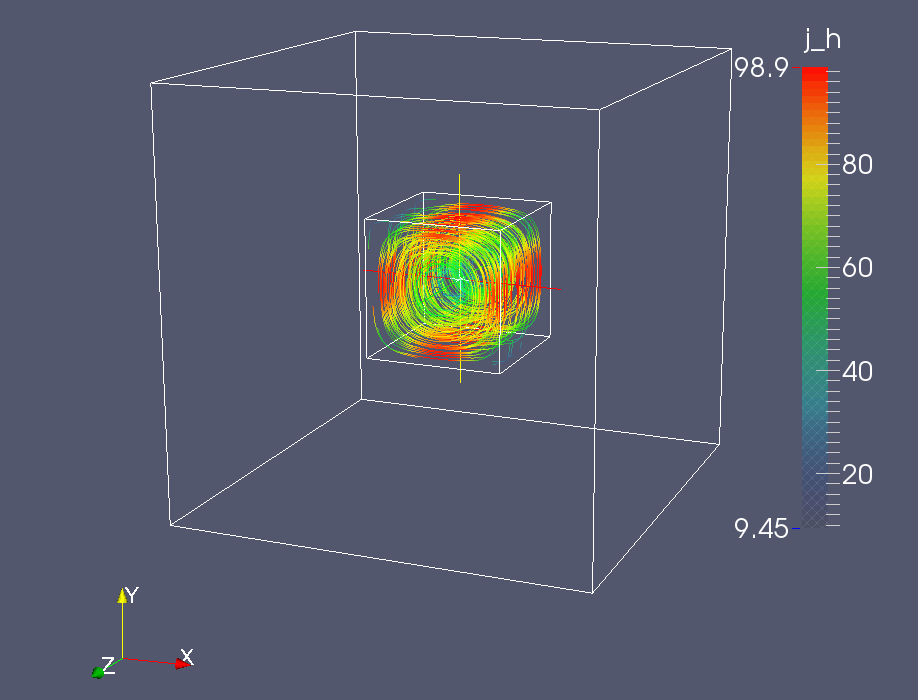}
\includegraphics[width=5cm,height=5cm]{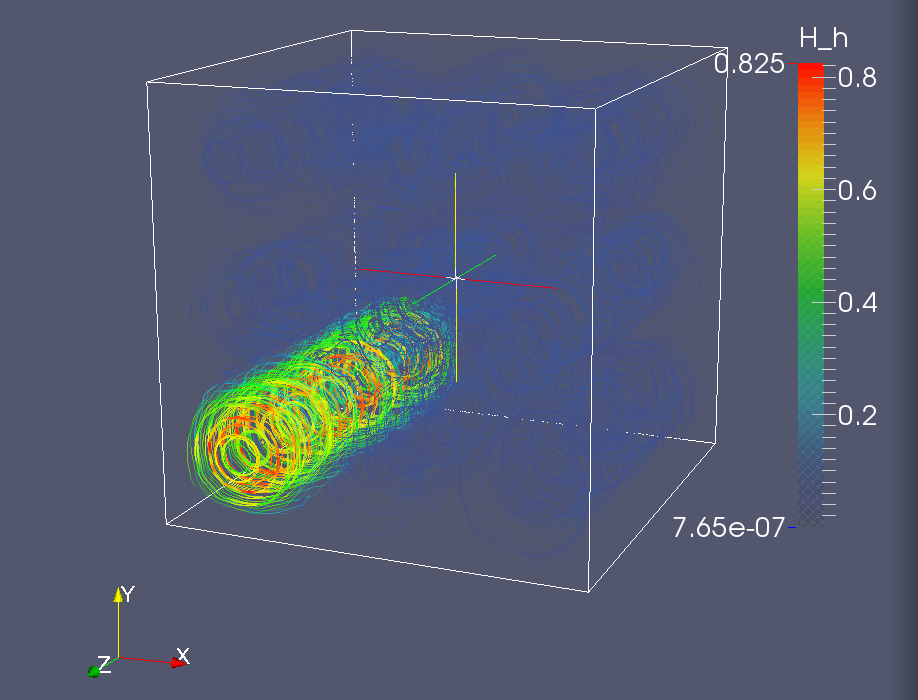}
\caption{Computed optimal control (left plot) and optimal magnetic field (right plot) on the finest adaptive mesh.}
\label{fig3}
\end{figure}
 
In our second test, we carried out a numerical  experiment by making use of   the exact total error 
$
\tnorm{(\bar{H}-\bar{H}_h,\bar{j}-\bar{j}_h)}
$
as the estimator (exact estimator) in the adaptive mesh refinement. More precisely, we replaced $\mathcal M_T$ in the D\"orfler marking strategy \eqref{MARK2} by the exact total error over each element $ T \in \mathcal{T}_h(\Omega)$. Figure \ref{fig4} depicts  the computed total error resulting from this adaptive technique compared with our method. Here, the convergence performance of the mesh refinement strategy using the exact estimator turns out to be quite similar to the one based on the estimator $\mathcal M_h$. Also, the resulting adaptive meshes from these two methods exhibit a similar structure, see Figure \ref{fig5}.  
 Based on these numerical results, 
we finally conclude that the  proposed a posteriori estimator $\mathcal M_h$  is indeed suitable for an adaptive mesh refinement strategy, in order to improve the convergence performance  of the finite element solution towards the optimal one.

\begin{table}[!ht]
\begin{center}
\setlength{\abovecaptionskip}{1pt}
\setlength{\belowcaptionskip}{1pt}
\renewcommand{\arraystretch}{0.8}
\setlength{\tabcolsep}{0.2cm}
\begin{tabular*}{0.6\textheight}
{@{\extracolsep{\fill}}cccccccccc}
\hline
DoF & Error in $H$&  Error in $j$ & Total Error  \\
\hline
\hline         
$4940$&$0.864259760285$&$ 3.15539577688$&$3.2716154178$ \\
$ 5372$&$0.700582925336$&$3.0269236357$&$3.10694112137$ \\
$ 5956$&$0.567880369596$&$2.59095417982$&$2.65245766717$ \\
$ 6866$&$0.525899386428$&$1.65477728914$&$1.73633465706$ \\
$7975 $&$ 0.491051451195$&$1.79991321699$&$1.86569534395$ \\
$13420$&$0.475834638164$&$1.68710457122$&$1.75292339739$ \\
$21122$&$0.469036197488$&$1.76583157736$&$1.82706215389$ \\
$31404 $&$0.459163475711$&$1.65610319012$&$1.71857757281$ \\
$44722 $&$ 0.438814299362$&$1.41717667783$&$1.48355914123$ \\
$ 62092 $&$0.377265302988$&$1.09347162408$&$ 1.15672351991 $ \\
$88972$&$0.297757792322$&$ 0.883606131143$&$ 0.932426671584$ \\
$ 129694 $&$ 0.268987264855$&$0.837765084641$&$ 0.879888905316$ \\
$ 215804$&$0.208852836651$&$ 0.721694386498$&$0.751307057654$ \\
$ 334072 $&$0.194097809391$&$0.587416582193$&$ 0.618653538457$ \\
$ 538189 $&$0.157893445276$&$0.494322025147$&$ 0.518926396136$ \\
\hline
\end{tabular*}
\end{center}
\caption{Convergence history for the adaptive  refinement  using  the exact estimator.}
\label{tab2}
\end{table}

\begin{figure}[h!]
\centering
\includegraphics[scale=0.3]{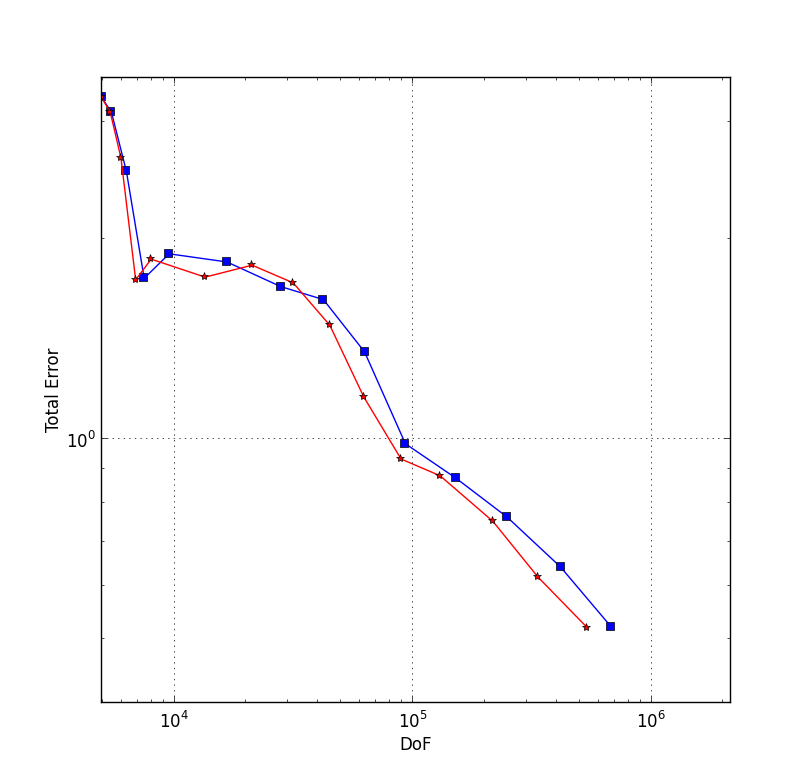}
\caption{Total error for the  adaptive  refinement strategies based on the exact estimator (red line) and the estimator  $\mathcal M_h$ (blue line).}
\label{fig4}
\end{figure}

\begin{figure}[h!]
\centering
\includegraphics[width=5cm,height=5cm]{mesh_adaptive1.png}
\includegraphics[width=5cm,height=5cm]{mesh_adaptive2.png}\\
\includegraphics[width=5cm,height=5cm]{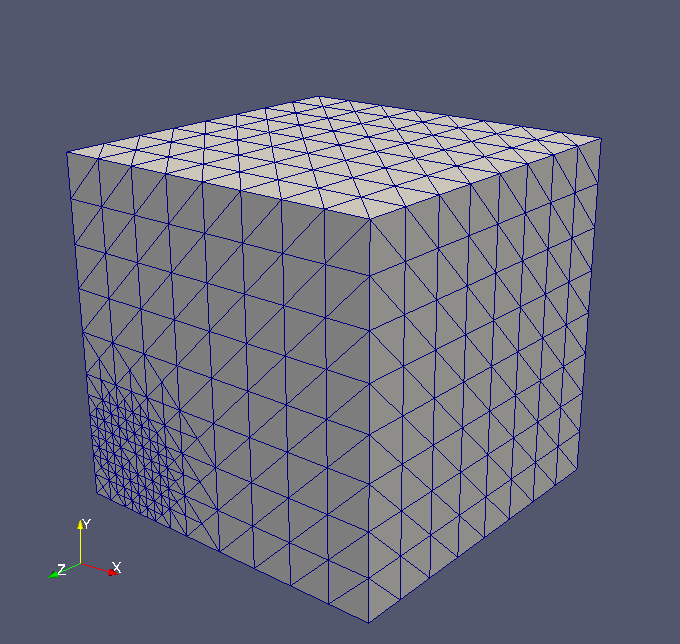}
\includegraphics[width=5cm,height=5cm]{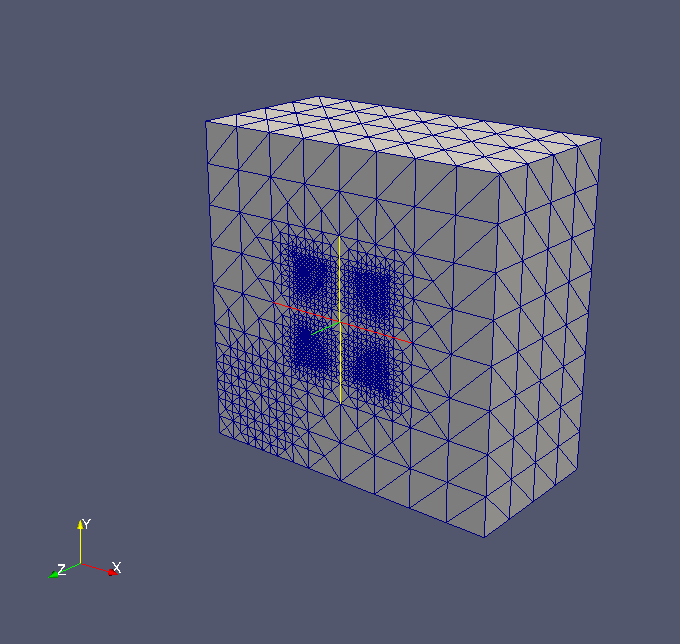}
\caption{Adaptive mesh resulting from the estimator $\mathcal M_h$ (upper plot) and the exact estimator (lower plot).}
\label{fig5}
\end{figure}

{\footnotesize
\bibliographystyle{plain} 
\bibliography{paule-irwin}
}

\end{document}